\providecommand{\U}[1]{\protect\rule{.1in}{.1in}}
\definecolor{anti-flashwhite}{rgb}{0.95, 0.95, 0.96}
\definecolor{gray(x11gray)}{rgb}{0.75, 0.75, 0.75}
\newtheorem{theorem}{Theorem}
\theoremstyle{plain}
\newtheorem{corollary}{Corollary}
\newtheorem{definition}{Definition}
\newtheorem{lemma}{Lemma}
\newtheorem{remark}{Remark}
\numberwithin{equation}{section}
\begin{document}

\begin{center}
{\Large \textbf{ Riemann-Liouville type fractional a new generalization of 
Bernstein-Kantorovich operators }}\newline\vspace{.9cm} 

\textbf{{Re\c{s}at
Aslan}}

\vspace{.2cm} Department of Mathematics, Van Yuzuncu Yil University, 65080
Van, Turkey\newline

E-mails: resat63@hotmail.com
\end{center}

\vspace{.8cm}

{\textbf{Abstract.}} Approximation theory is a substantial field of
mathematical analysis that emerged in the 19th century and has been developed
by mathematicians across the globe ever since. Its importance has increased
over time, as it provides solutions to numerous scientific challenges not only
in mathematics but also in fields like as physics and engineering etc. In the
present work, we construct Riemann-Liouville type fractional a new generalization of Bernstein-Kantorovich type
operators. First, we obtain the moment and central moments from some basic
calculations. Also, we study several direct and local approximation outcomes of the
constructed operators. Next, we serve up certain graphical and numerical
results to demonstrate the convergence, accuracy and significance of
constructed operators. Further, we provide bivariate version of the newly
constructed operators and establish degree of approximation through of partial
and complete modulus of continuity. Lastly, we present some graphical
representations and maximum error of approximation tables to verify the convergence
behavior of bivariate form of related operators based on various
parameters.\newline

{\textit{Keywords:}} Fractional calculus; Shape parameter $\alpha$; Modulus of
continuity; Computer graphics; Order of convergence.

{\textit{Mathematics Subject Classification}} (2020): Primary 41A25; Secondary
41A35, 41A36
%\maketitle

\section{Introduction}

Approximation theory has been a cornerstone of mathematical analysis for a
long time, with its influence extending across pure mathematics, computational
science, and practical domains such as engineering and computer graphics. A
pivotal and widely recognized result in this field is the Weierstrass
Approximation Theorem, which demonstrates that any continuous function defined
on a closed interval can be uniformly approximated using polynomials. In 1912,
Bernstein \cite{bernstein1912} devised a remarkable and elegant proof of
Weierstrass's approximation theorem using a sequence of polynomials.
The emergence of fractional calculus has brought a transformative perspective
to operator theory within the realm of approximation. The convergence of
fractional calculus and operator theory constitutes an expanding area of
research with applications extending beyond the realm of pure mathematics into
interdisciplinary fields such as control theory, signal processing, and
material science. Fractional derivatives and integrals, especially those of
the Riemann-Liouville type have proven to be powerful instruments for modeling
systems characterized by memory effects or hereditary behavior, including
complex fluids, mathematical biology \cite{13} to the modeling of viscoelastic
materials \cite{14}, where integer-order derivatives fail to capture the
underlying dynamics. This versatility facilitates a more accurate
correspondence between mathematical models and empirical phenomena, thereby
improving the precision and reliability of simulations and predictive analyses.

Recent developments in approximation theory have concentrated on including
shape parameters $0\leq\alpha\leq1$ and $-1\leq\lambda\leq1$, the model gains significant flexibility, into traditional operators significantly
improving their adaptability and accuracy.

In \cite{Chen}, Chen et al. invented a new class of Bernstein operators by
using the shape parameter $\alpha$ for real number $0\leq\alpha\leq1$.
Recently, Aktu\u{g}lu et al. \cite{aktuglu} presented a new generalized
blending version of $\alpha$-Bernstein operators as:
\begin{equation}
L_{m}^{(\alpha,s)}\left(  \phi;z\right)  =\sum\limits_{m=0}^{j}\phi(\frac
{m}{j})q_{m,j}^{(\alpha,s)}(z)\;\;\;\;\;\;\left(  z\in\lbrack0,1]\right)
\label{e2}%
\end{equation}
where $0\leq\alpha\leq1,s\in%
%TCIMACRO{\U{2115} }%
%BeginExpansion
\mathbb{N}
%EndExpansion
\cup\left\{  0\right\}  ,\phi\in C[0,1]$ and
\[
q_{m,j}^{(\alpha,s)}(z)=\left\{
\begin{array}
[c]{c}%
\binom{j}{m}z^{m}\left(  1-z\right)  ^{j-m},\text{ }\;\text{for }(m<s),\\
\\
\left(  1-\alpha\right)  \binom{j-s}{m-s}z^{m-s+1}(1-z)^{j-m}\\
+\left(  1-\alpha\right)  \binom{j-s}{m}z^{m}(1-z)^{j-s-m+1}\\
\text{ \ \ \ \ }+\alpha\binom{j}{m}z^{m}\left(  1-z\right)  ^{j-m},\text{ for
}(m\geq s).\text{ }\;\text{\ }%
\end{array}
\right.  \;\;\;\;\;\;\;\;\;
\]
The binomial coefficients in the above relation can be given by
\[
\binom{j}{m}=\left\{
\begin{array}
[c]{ll}%
\frac{j!}{m!(j-m)!}, & ~~~(0\leq m\leq j),\\
0, & ~~~(otherwise).
\end{array}
\right.
\]
In particular taking $s=1$ in (\ref{e2}), one can get classical Bernstein operators
\cite{bernstein1912} and $s=2$, then it turn out to the $\alpha$-Bernstein operators respectively. The
Bernstein basis function, defined as $q_{m,j}(z)=\binom{j}{m}z^{m}\left(
1-z\right)  ^{j-m}$, is a polynomial used as the foundation for constructing
B\'{e}zier curves and surfaces. These functions serve as building
blocks due to their advantageous properties, such as smoothness, locality, and
their ability to form a partition of unity. B\'{e}zier curves and
surfaces, created using Bernstein basis functions, are find extensive application in geometric modeling, computer-aided geometric design (CAGD), and approximation theory. In the last decade, many researchers have analyzed different
approximation properties of various generalized or modified linear positive
operators using the above-mentioned shape parameters in order to get better
approximation results. For instance; Mohiuddine et al. \cite{samtaaammas}
improved the Kantorovich modification of $\alpha$-Bernstein operators.
$\alpha$-Bernstein operators in terms of $q$-integers are introduced by
Agrawal et al. \cite{agrawal2022rate}. In \cite{mohiuddine1}, \"{O}zger et al.
studied a novel class of Bernstein-Schurer operators which include shape
parameter $\alpha$. Kajla et al. \cite{kajacammn} obtained some approximation
results of Durrmeyer type modification for the $\alpha$-Bernstein operators.
In \cite{usta0}, Ansari et al. examined some generalization of
$\alpha-$Szasz-Mirakyan operators. Riemann-Liouville type fractional
Kantorovich form of $\alpha$-Bernstein operators proposed and studied by
Berwal et al. \cite{berwalapproximation}. Aslan et al. \cite{aslanmursaleen}
investigated several approximation results for a new class of $\lambda-$Bernstein operators. Aral et
al. \cite{aralerbaymc} presented a new generalization of Baskakov type
operators via shape parameter $\alpha.$ Cai et al. \cite{cai-aslan2021}
proposed a new modification of $(\lambda,q)-$Bernstein operators. We also
bring to the attention of readers similar recent
studies:\cite{nasirrao,baytunc,MuhiddinFaruk,turhan2024,odabacsi2023parametric,srivastava2019,cai2022statistical,aslan1,aslan2024,ajay2020,ayman2024,nadeem2,cai-aslan2024,nadeem4,acu,ozger2021}%
.

Now, before moving forward, let us revisit some well-known definitions from the literature.

\begin{definition}
The function which is known Gamma, $\Gamma:(0,\infty)\rightarrow%
%TCIMACRO{\U{211d} }%
%BeginExpansion
\mathbb{R}
%EndExpansion
$ is given by
\[
\Gamma(\eta)=\overset{\infty}{\underset{0}{%
%TCIMACRO{\dint }%
%BeginExpansion
{\displaystyle\int}
%EndExpansion
}}e^{-v}v^{\eta-1}dv,\text{ \ \ }\operatorname{Re}(\eta)>0.
\]

\end{definition}

\begin{definition}
The Euler's integral Beta function is defined as:
\[
B(y,z)=\overset{1}{\underset{0}{%
%TCIMACRO{\dint }%
%BeginExpansion
{\displaystyle\int}
%EndExpansion
}}v^{y-1}(1-v)^{z-1}dv,\text{ \ \ }\operatorname{Re}(y),\text{ }%
\operatorname{Re}(z)>0.
\]

Additionally, for \(u, v > 0\), the relationship between the Gamma and Beta functions is given by:
\[
B(u,v)=\frac{\Gamma(u)\Gamma(v)}{\Gamma(u+v)}.
\]

\end{definition}

\begin{definition}(\cite{samko}) For the order of
$\alpha>0$ and integrable function $\phi\in\lbrack0,\infty)$, Riemann-Liouville type fractional integral is defined by
\[
(a^{+}I_{z}^{\alpha}\phi)(.)=\frac{1}{\Gamma(\alpha)}\overset{z}{\underset{a}{%
%TCIMACRO{\dint }%
%BeginExpansion
{\displaystyle\int}
%EndExpansion
}}(z-u)^{\alpha-1}\phi(u)du,\text{ \ \ }z>0.
\]

\end{definition}

In 2024, Kadak \cite{kadak} constructed for $\eta>0,$ $m\in%
%TCIMACRO{\U{2115} }%
%BeginExpansion
\mathbb{N}
%EndExpansion
$ the following fractional Bernstein-Kantorovich operators%
\[
K_{m}^{\eta}(\phi;z)=\eta\sum\limits_{j=0}^{m}q_{m,j}(z)\overset{1}%
{\underset{0}{%
%TCIMACRO{\dint }%
%BeginExpansion
{\displaystyle\int}
%EndExpansion
}}(1-t)^{\eta-1}\phi(\frac{j+t}{m+1})dt,
\]

where $z\in\lbrack0,1]$ and $q_{m,j}(z)=\binom{j}{m}z^{m}\left(  1-z\right)
^{j-m}.$

In \cite{kadak}, Kadak investigated some moment estimates, order of
convergence, graphical reprenstations and advantage of these new operators. In another work by Kadak \cite{kadak2} provides significant insights into a novel family of multivariate neural network operators, incorporating the Riemann-Liouville fractional integral operator. By deeply exploring their properties, Kadak established theoretical results that enhance the understanding of how fractional calculus can improve the performance and adaptability of neural network operators in approximating multivariate functions. Also, Baxhaku et al. \cite{behar} introduced Kantorovich-type deep neural network operators constructed using Riemann–Liouville fractional integrals.
Driven by the growing interest in operators and the pursuit of improved approximation properties, we examine a new generalization of Riemann-Liouville-type fractional Bernstein-Kantorovich operators as outlined below:%
\begin{equation}
\Re_{m,\eta,\gamma}^{(\alpha,s)}(\phi;z)=\Gamma(\eta+1)\sum\limits_{j=0}%
^{m}q_{m,j}^{(\alpha,s)}(z)\overset{1}{\underset{0}{%
%TCIMACRO{\dint }%
%BeginExpansion
{\displaystyle\int}
%EndExpansion
}}\frac{(1-t)^{\eta-1}}{\Gamma(\eta)}\phi(\frac{j+t^{\gamma}}{m+1})dt,
\label{e7}%
\end{equation}

where $\eta>0,$ $\gamma>0,$ $0\leq\alpha\leq1,$ $s\in%
%TCIMACRO{\U{2115} }%
%BeginExpansion
\mathbb{N}
%EndExpansion
\cup\left\{  0\right\}  ,$ $\phi\in C[0,1]$ and $q_{m,j}^{(\alpha,s)}(z)$ are
defined by (\ref{e2}).

The structure of this paper is arranged as follows: In Section \ref{sec2}, we give some
auxiliary results such as moments and central moments calculations. In Section
\ref{sec3}, we present uniform convergence theorem, rate of convergence by
using Peetre's $K$-functional and Lipschitz-type continuous
functions for the constructed operators. Also, we attain certain graphical results
and numerical maximum error of approximation tables for some fixed parameters to
compare the effectiveness of the operators $\Re_{m,\eta,\gamma}^{(\alpha
,s)}(\phi;z)$. In Section \ref{sec4}, we consider the bivariate extension of
operators (\ref{e7}) and derive the degree of convergence with the use of
partial and complete modulus of continuity. Lastly, we present some representations and numerical absolute error tables for the approximation in the bivariate case of the operators (\ref{e7}).

\section{Auxiliary results}

In this section, we begin our investigation by examining key estimates
concerning moments and central moments, which play a crucial role in
establishing the main results. Let us denote $e_{z}(t)=t^{z},$ where
$z=\{0,1,2\}$. \label{sec2}

\begin{lemma}
(\cite{aktuglu}) For the operators defined by (\ref{e2}), we get%
\begin{align*}
L_{m}^{(\alpha,s)}(e_{0};z)  &  =1,\\
L_{m}^{(\alpha,s)}(e_{1};z)  &  =z,\\
L_{m}^{(\alpha,s)}(e_{2};z)  &  =z^{2}+\frac{z(1-z)\left[  m+(1-\alpha
)s(s-1)\right]  }{m^{2}}.
\end{align*}

\end{lemma}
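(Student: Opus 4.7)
The plan is to establish each of the three identities by splitting the moment sum according to the piecewise definition of $q_{k,m}^{(\alpha,s)}(z)$ in (\ref{e2}) and reducing the pieces to classical Bernstein-type sums, whose moments $\sum_{k=0}^{N}\binom{N}{k}z^{k}(1-z)^{N-k}=1$, $\sum_{k=0}^{N}k\binom{N}{k}z^{k}(1-z)^{N-k}=Nz$, and $\sum_{k=0}^{N}k(k-1)\binom{N}{k}z^{k}(1-z)^{N-k}=N(N-1)z^{2}$ are standard.

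For $L_{m}^{(\alpha,s)}(e_{0};z)$, after summing $q_{k,m}^{(\alpha,s)}$ over $k$ I would observe that the substitution $k'=k-s$ converts $\sum_{k\ge s}\binom{m-s}{k-s}z^{k-s+1}(1-z)^{m-k}$ into $z\cdot\sum_{k'=0}^{m-s}\binom{m-s}{k'}z^{k'}(1-z)^{m-s-k'}=z$, and similarly the companion sum $\sum_{k\ge s}\binom{m-s}{k}z^{k}(1-z)^{m-s-k+1}$ reduces to $(1-z)$; combined with the $\alpha$-weighted classical Bernstein piece and the $k<s$ branch, these three contributions collapse to $1$. For $L_{m}^{(\alpha,s)}(e_{1};z)$ the same split applied to $k/m$ yields $\alpha z$ from the classical piece and, after the same re-indexing and an application of the first-moment identity on the degree-$(m-s)$ Bernstein basis, an extra $(1-\alpha)z$ from the two shifted pieces, for a total of $z$.

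For $L_{m}^{(\alpha,s)}(e_{2};z)$ the scheme is identical, but with $k^{2}=k(k-1)+k$ and, after re-indexing by $k'=k-s$, the expansion $(k'+s)^{2}=k'(k'-1)+(2s+1)k'+s^{2}$. The $\alpha$-piece contributes $\alpha[z^{2}+z(1-z)/m]$ through the classical second moment, and careful algebraic cancellation of the $s$- and $s^{2}$-terms between the two $(1-\alpha)$ sub-sums (keeping track of the extra $z$ and $(1-z)$ factors that survived the re-indexing) leaves the residual $(1-\alpha)s(s-1)z(1-z)/m^{2}$ that, added to the $\alpha$-contribution, produces the claimed formula.

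The main obstacle is the bookkeeping in the second moment: the shifted expansions $(k'+s)^{2}$ interact with both the extra $z$ and $(1-z)$ factors and must be paired across the two $(1-\alpha)$ sub-sums so that almost everything cancels, leaving only $(1-\alpha)s(s-1)z(1-z)/m^{2}$. A cleaner organization is to rewrite $q_{k,m}^{(\alpha,s)}(z)=\binom{m}{k}z^{k}(1-z)^{m-k}+(1-\alpha)\chi_{\{k\ge s\}}\bigl[\binom{m-s}{k-s}z^{k-s+1}(1-z)^{m-k}+\binom{m-s}{k}z^{k}(1-z)^{m-s-k+1}-\binom{m}{k}z^{k}(1-z)^{m-k}\bigr]$, so that the classical Bernstein term reproduces the classical moments exactly and only the $(1-\alpha)$ remainder needs evaluation; the $s(s-1)$ factor then emerges naturally from applying the $k(k-1)$ identity to the degree-$(m-s)$ polynomials after shifting by $s$.
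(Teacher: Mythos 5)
The paper offers no proof of this lemma at all — it is imported verbatim from \cite{aktuglu} — so your direct verification is your own route; unfortunately it contains a genuine gap, located exactly at the piecewise structure of the basis. You assert that the companion sum $\sum_{k\ge s}\binom{m-s}{k}z^{k}(1-z)^{m-s-k+1}$ reduces to $1-z$. It does not: no re-indexing occurs in this sum, so it equals $(1-z)\bigl[1-\sum_{k=0}^{s-1}\binom{m-s}{k}z^{k}(1-z)^{m-s-k}\bigr]$, and the omitted terms $k=0,\dots,s-1$ are nonzero. The same truncation error propagates into your first- and second-moment computations, so the advertised cancellations do not occur. In fact, if one reads the case distinction in (\ref{e2}) literally as a condition on the summation index — which is what your indicator $\chi_{\{k\ge s\}}$ encodes — the basis fails to be a partition of unity: for degree $2$ and $s=2$ one gets $q_{0,2}+q_{1,2}+q_{2,2}=(1-z)^{2}+2z(1-z)+(1-\alpha)z+\alpha z^{2}=1+(1-\alpha)z(1-z)$, so already $L(e_{0};z)\ne 1$ and no bookkeeping can rescue the lemma from that reading.

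The resolution is that in \cite{aktuglu} the dichotomy is on the degree relative to $s$, not on the index: for degrees $\ge s$ the blended expression is used for \emph{every} index $k=0,\dots,m$ (the convention $\binom{j}{m}=0$ outside $0\le m\le j$ kills the out-of-range terms), and the classical basis is used only for degrees below $s$. This is also the only reading under which the paper's remark that $s=2$ recovers the $\alpha$-Bernstein operators of Chen et al.\ is correct. Once all three sub-sums run over the full index range, your plan closes exactly as described: writing $b_{N,k}(z)=\binom{N}{k}z^{k}(1-z)^{N-k}$, the first sub-sum re-indexes by $k'=k-s$ to $z\sum_{k'}b_{m-s,k'}(z)$, the second equals $(1-z)\sum_{k}b_{m-s,k}(z)$, and the identities $\sum_{k}k\,b_{N,k}(z)=Nz$, $\sum_{k}k(k-1)b_{N,k}(z)=N(N-1)z^{2}$ with $N=m-s$, together with $(k'+s)^{2}=k'(k'-1)+(2s+1)k'+s^{2}$, give $(1-\alpha)\bigl[z^{2}+\frac{z(1-z)}{m}\bigr]+(1-\alpha)\frac{s(s-1)z(1-z)}{m^{2}}$ for the two $(1-\alpha)$ pieces — note you need the base term $(1-\alpha)\bigl[z^{2}+\frac{z(1-z)}{m}\bigr]$ as well, not only the $s(s-1)$ residual — which added to the $\alpha$-weighted classical second moment yields the stated formula.
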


\begin{lemma}
For the operators given by (\ref{e7}), following recurrence relation%
\[
\Re_{m,\eta,\gamma}^{(\alpha,s)}(e_{i};z)=\frac{\Gamma(\eta+1)}{(m+1)^{i}}%
\sum\limits_{n=0}^{i}\binom{i}{n}m^{n}L_{m}^{(\alpha,s)}(e_{n};z)\frac
{\Gamma(\gamma(i-n)+1)}{\Gamma(\eta+\gamma(i-n)+1)},
\]

holds, where $L_{m}^{(\alpha,s)}(e_{n};z)$ given by (\ref{e2}).

\begin{proof}
In view of the definition of (\ref{e7}) and $L_{m}^{(\alpha,s)}(e_{n}%
;z)$, one can arrive%
\begin{align*}
\Re_{m,\eta,\gamma}^{(\alpha,s)}(e_{i};z)  &  =\Gamma(\eta+1)\sum
\limits_{j=0}^{m}q_{m,j}^{(\alpha,s)}(z)\overset{1}{\underset{0}{%
%TCIMACRO{\dint }%
%BeginExpansion
{\displaystyle\int}
%EndExpansion
}}\frac{(1-t)^{\eta-1}}{\Gamma(\eta)}\left(  \frac{j+t^{\gamma}}{m+1}\right)
^{i}dt\\
&  =\frac{\Gamma(\eta+1)}{\Gamma(\eta)(m+1)^{i}}\sum\limits_{j=0}^{m}%
q_{m,j}^{(\alpha,s)}(z)\overset{1}{\underset{0}{%
%TCIMACRO{\dint }%
%BeginExpansion
{\displaystyle\int}
%EndExpansion
}}(1-t)^{\eta-1}\sum\limits_{n=0}^{i}\binom{i}{n}j^{n}t^{\gamma(i-n)}dt\\
&  =\frac{\Gamma(\eta+1)}{\Gamma(\eta)(m+1)^{i}}\sum\limits_{j=0}^{m}\left(
\sum\limits_{n=0}^{i}\binom{i}{n}j^{n}B(\eta,\gamma(i-n)+1)\right)
q_{m,j}^{(\alpha,s)}(z)\\
&  =\frac{\Gamma(\eta+1)}{(m+1)^{i}}\sum\limits_{j=0}^{m}q_{m,j}^{(\alpha
,s)}(z)\sum\limits_{n=0}^{i}\binom{i}{n}j^{n}\frac{\Gamma(\gamma
(i-n)+1)}{\Gamma(\eta+\gamma(i-n)+1)}\\
&  =\frac{\Gamma(\eta+1)}{(m+1)^{i}}\sum\limits_{n=0}^{i}\binom{i}{n}%
m^{n}L_{m}^{(\alpha,s)}(e_{n};z)\frac{\Gamma(\gamma(i-n)+1)}{\Gamma
(\eta+\gamma(i-n)+1)},
\end{align*}

which ends the assertion.
\end{proof}
\end{lemma}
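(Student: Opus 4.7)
The plan is to start directly from the defining formula (\ref{e7}) with $\phi(x)=e_i(x)=x^i$, expand the binomial $(j+t^{\gamma})^i$ inside the integrand using the binomial theorem, and then exchange the finite sums with the integral. This gives
\[
\Re_{m,\eta,\gamma}^{(\alpha,s)}(e_i;z)=\frac{\Gamma(\eta+1)}{\Gamma(\eta)(m+1)^i}\sum_{j=0}^{m} q_{m,j}^{(\alpha,s)}(z)\sum_{n=0}^{i}\binom{i}{n}j^{n}\int_{0}^{1}(1-t)^{\eta-1}t^{\gamma(i-n)}\,dt,
\]
so the first step is purely algebraic and involves no subtle manipulation.

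Next, I would recognize the remaining integral as an Euler Beta integral. By Definition~2, $\int_0^1 (1-t)^{\eta-1}t^{\gamma(i-n)}\,dt = B(\eta,\gamma(i-n)+1)$, and invoking the Gamma-Beta identity $B(u,v)=\Gamma(u)\Gamma(v)/\Gamma(u+v)$ produces the factor $\Gamma(\eta)\Gamma(\gamma(i-n)+1)/\Gamma(\eta+\gamma(i-n)+1)$, which cancels the stray $\Gamma(\eta)$ in the denominator.

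Finally, I would interchange the order of the $n$- and $j$-summations and pull the $n$-independent factor $m^n/m^n$ so as to recover the moments of the operator $L_m^{(\alpha,s)}$; indeed, from (\ref{e2}) we have $L_{m}^{(\alpha,s)}(e_n;z)=\sum_{j=0}^{m}(j/m)^{n}q_{m,j}^{(\alpha,s)}(z)$, hence $\sum_{j=0}^{m} j^{n}q_{m,j}^{(\alpha,s)}(z)=m^{n}L_{m}^{(\alpha,s)}(e_n;z)$. Substituting this identity yields exactly the stated formula.

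I do not foresee a real obstacle: the argument is essentially a chain of bookkeeping steps (binomial expansion, Fubini, Beta evaluation, repackaging sums). The only point requiring a little care is correctly tracking the $(m+1)^i$ normalization from the $(j+t^\gamma)/(m+1)$ factor versus the $m^n$ that emerges after rewriting $j^n$ as $m^n(j/m)^n$, and making sure the $\Gamma(\eta+1)/\Gamma(\eta)$ cancellation is handled so that the final prefactor $\Gamma(\eta+1)/(m+1)^i$ matches the statement.
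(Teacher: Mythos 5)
Your proposal is correct and follows exactly the same route as the paper's proof: binomial expansion of $(j+t^{\gamma})^{i}$, evaluation of the resulting integral as $B(\eta,\gamma(i-n)+1)$ via the Gamma--Beta identity (which cancels the $\Gamma(\eta)$), and then the interchange of sums together with $\sum_{j=0}^{m}j^{n}q_{m,j}^{(\alpha,s)}(z)=m^{n}L_{m}^{(\alpha,s)}(e_{n};z)$. Nothing is missing.
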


\begin{lemma}
\label{L2} For the operators given by (\ref{e7}), one can get following
moments:
\begin{align*}
\Re_{m,\eta,\gamma}^{(\alpha,s)}(e_{0};z)  &  =1,\\
\Re_{m,\eta,\gamma}^{(\alpha,s)}(e_{1};z)  &  =\frac{m}{m+1}z+\frac{1}%
{m+1}\frac{\eta!\gamma!}{(\eta+\gamma)!},\\
\Re_{m,\eta,\gamma}^{(\alpha,s)}(e_{2};z)  &  =\frac{m^{2}}{(m+1)^{2}}%
z^{2}+\frac{z(1-z)\left[  m+(1-\alpha)s(s-1)\right]  }{(m+1)^{2}}\\
&  +\frac{2m}{(m+1)^{2}}\frac{\eta!\gamma!}{(\eta+\gamma)!}z+\frac
{1}{(m+1)^{2}}\frac{\eta!(2\gamma)!}{(\eta+2\gamma)!}.
\end{align*}

\end{lemma}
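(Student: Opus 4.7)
The plan is to view this lemma as an immediate consequence of the recurrence relation proved in the preceding lemma, specializing it to the values $i = 0, 1, 2$ and substituting the explicit formulas for $L_{m}^{(\alpha,s)}(e_{n};z)$ from the lemma of Aktu\u{g}lu et al.

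First I would handle $i=0$ as a sanity check: the sum collapses to a single term $n=0$, giving
\[
\Re_{m,\eta,\gamma}^{(\alpha,s)}(e_{0};z)=\Gamma(\eta+1)\cdot 1 \cdot \frac{\Gamma(1)}{\Gamma(\eta+1)}=1,
\]
which confirms that the operator preserves constants. Next for $i=1$ I would write out the two contributions from $n=0$ and $n=1$. The $n=0$ term contributes $\frac{\Gamma(\eta+1)\Gamma(\gamma+1)}{(m+1)\Gamma(\eta+\gamma+1)}$, which under the convention $\Gamma(k+1)=k!$ used in the statement becomes $\frac{1}{m+1}\frac{\eta!\,\gamma!}{(\eta+\gamma)!}$; the $n=1$ term is $\frac{\Gamma(\eta+1)}{m+1}\cdot m\cdot L_{m}^{(\alpha,s)}(e_{1};z)\cdot\frac{\Gamma(1)}{\Gamma(\eta+1)}=\frac{m}{m+1}z$, and summing gives the claimed expression for $\Re_{m,\eta,\gamma}^{(\alpha,s)}(e_{1};z)$.

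Finally, for $i=2$ the recurrence produces three terms indexed by $n=0,1,2$. The $n=2$ term is $\frac{m^{2}}{(m+1)^{2}}L_{m}^{(\alpha,s)}(e_{2};z)$, into which I substitute Aktu\u{g}lu et al.'s formula; the factor $m^{2}$ clears the $m^{2}$ in the denominator of the central-moment contribution, yielding $\frac{m^{2}}{(m+1)^{2}}z^{2}+\frac{z(1-z)[m+(1-\alpha)s(s-1)]}{(m+1)^{2}}$. The $n=1$ term provides the linear-in-$z$ part $\frac{2m}{(m+1)^{2}}\frac{\eta!\gamma!}{(\eta+\gamma)!}z$, and the $n=0$ term the constant $\frac{1}{(m+1)^{2}}\frac{\eta!(2\gamma)!}{(\eta+2\gamma)!}$. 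Adding these three pieces reproduces the stated formula exactly.

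There is no conceptual obstacle here; the entire argument is a substitution exercise. The only point to be careful about is the bookkeeping of the Gamma factors: consistently canceling $\Gamma(\eta+1)/\Gamma(\eta+\gamma(i-n)+1)$ against $\Gamma(\gamma(i-n)+1)$ and then rewriting in factorial form requires that one track the $(m+1)^{i}$, $m^{n}$, and $\binom{i}{n}$ weights correctly. Since the previous lemma already absorbed the Beta-function reduction, the derivation reduces to arithmetic simplification and requires no further analytic input.
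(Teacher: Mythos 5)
Your proposal is correct, and every term you compute checks out: specializing the recurrence
\[
\Re_{m,\eta,\gamma}^{(\alpha,s)}(e_{i};z)=\frac{\Gamma(\eta+1)}{(m+1)^{i}}\sum_{n=0}^{i}\binom{i}{n}m^{n}L_{m}^{(\alpha,s)}(e_{n};z)\frac{\Gamma(\gamma(i-n)+1)}{\Gamma(\eta+\gamma(i-n)+1)}
\]
to $i=0,1,2$ and substituting the Aktu\u{g}lu et al.\ moments of $L_{m}^{(\alpha,s)}$ reproduces each stated formula exactly, including the cancellation of $m^{2}$ against the denominator of the second-moment term. Your route is, however, not the one the paper takes: the paper's own proof of this lemma ignores the recurrence it has just established and instead recomputes each moment directly from the definition of the operator, expanding $\bigl(\frac{j+t^{\gamma}}{m+1}\bigr)^{i}$, evaluating the Beta integrals $B(\eta,1)$, $B(\eta,\gamma+1)$, $B(\eta,2\gamma+1)$ term by term, and then invoking the moments of $L_{m}^{(\alpha,s)}$. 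The two arguments rest on identical ingredients (the binomial expansion, the Beta--Gamma identity, and the known moments of the blending-type Bernstein operators), so they are mathematically equivalent; your version is simply the more economical packaging, since the integral bookkeeping was already absorbed into the recurrence lemma and need not be repeated. The one point worth making explicit in a write-up is that the factorial notation $\eta!$, $\gamma!$, $(\eta+\gamma)!$ in the statement is shorthand for $\Gamma(\eta+1)$, $\Gamma(\gamma+1)$, $\Gamma(\eta+\gamma+1)$ with $\eta,\gamma>0$ not necessarily integers; you note this convention, which is all that is needed.
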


\begin{proof}
By the definition of (\ref{e7}), it follows
\begin{align*}
\Re_{m,\eta,\gamma}^{(\alpha,s)}(e_{0};z)  &  =\Gamma(\eta+1)\sum
\limits_{j=0}^{m}q_{m,j}^{(\alpha,s)}(z)\overset{1}{\underset{0}{%
%TCIMACRO{\dint }%
%BeginExpansion
{\displaystyle\int}
%EndExpansion
}}\frac{(1-t)^{\eta-1}}{\Gamma(\eta)}dt\\
&  =\frac{\Gamma(\eta+1)}{\Gamma(\eta)}\sum\limits_{j=0}^{m}q_{m,j}%
^{(\alpha,s)}(z)B(\eta,1)\\
&  =\sum\limits_{j=0}^{m}q_{m,j}^{(\alpha,s)}(z)=1.
\end{align*}
Also,%
\begin{align*}
\Re_{m,\eta,\gamma}^{(\alpha,s)}(e_{1};z)  &  =\Gamma(\eta+1)\sum
\limits_{j=0}^{m}q_{m,j}^{(\alpha,s)}(z)\overset{1}{\underset{0}{%
%TCIMACRO{\dint }%
%BeginExpansion
{\displaystyle\int}
%EndExpansion
}}\frac{(1-t)^{\eta-1}}{\Gamma(\eta)}\left(  \frac{j+t^{\gamma}}{m+1}\right)
dt\\
&  =\frac{\Gamma(\eta+1)}{\Gamma(\eta)}\sum\limits_{j=0}^{m}q_{m,j}%
^{(\alpha,s)}(z)\frac{j}{m+1}\overset{1}{\underset{0}{%
%TCIMACRO{\dint }%
%BeginExpansion
{\displaystyle\int}
%EndExpansion
}}(1-t)^{\eta-1}dt\\
&  +\frac{\Gamma(\eta+1)}{\Gamma(\eta)}\sum\limits_{j=0}^{m}q_{m,j}%
^{(\alpha,s)}(z)\frac{1}{m+1}\overset{1}{\underset{0}{%
%TCIMACRO{\dint }%
%BeginExpansion
{\displaystyle\int}
%EndExpansion
}}(1-t)^{\eta-1}t^{\gamma}dt\\
&  =\frac{\Gamma(\eta+1)m}{\Gamma(\eta)(m+1)}\sum\limits_{j=0}^{m}%
q_{m,j}^{(\alpha,s)}(z)\frac{j}{m}B(\eta,1)\\
&  +\frac{\Gamma(\eta+1)}{\Gamma(\eta)(m+1)}\sum\limits_{j=0}^{m}%
q_{m,j}^{(\alpha,s)}(z)B(\eta,\gamma+1)\\
&  =\frac{m}{m+1}\sum\limits_{j=0}^{m}q_{m,j}^{(\alpha,s)}(z)\frac{j}{m}%
+\frac{\Gamma(\eta+1)\Gamma(\gamma+1)}{(m+1)\Gamma(\eta+\gamma+1)}%
\sum\limits_{j=0}^{m}q_{m,j}^{(\alpha,s)}(z)\\
&  =\frac{m}{m+1}z+\frac{1}{m+1}\frac{\eta!\gamma!}{(\eta+\gamma)!}.
\end{align*}
\begin{align*}
\Re_{m,\eta,\gamma}^{(\alpha,s)}(e_{2};z)  &  =\Gamma(\eta+1)\sum
\limits_{j=0}^{m}q_{m,j}^{(\alpha,s)}(z)\overset{1}{\underset{0}{%
%TCIMACRO{\dint }%
%BeginExpansion
{\displaystyle\int}
%EndExpansion
}}\frac{(1-t)^{\eta-1}}{\Gamma(\eta)}\left(  \frac{j+t^{\gamma}}{m+1}\right)
^{2}dt\\
&  =\frac{\Gamma(\eta+1)}{\Gamma(\eta)}\sum\limits_{j=0}^{m}q_{m,j}%
^{(\alpha,s)}(z)\frac{j^{2}}{(m+1)^{2}}\overset{1}{\underset{0}{%
%TCIMACRO{\dint }%
%BeginExpansion
{\displaystyle\int}
%EndExpansion
}}(1-t)^{\eta-1}dt\\
&  +\frac{2\Gamma(\eta+1)}{\Gamma(\eta)}\sum\limits_{j=0}^{m}q_{m,j}%
^{(\alpha,s)}(z)\frac{j}{(m+1)^{2}}\overset{1}{\underset{0}{%
%TCIMACRO{\dint }%
%BeginExpansion
{\displaystyle\int}
%EndExpansion
}}(1-t)^{\eta-1}t^{\gamma}dt\\
&  +\frac{\Gamma(\eta+1)}{\Gamma(\eta)}\sum\limits_{j=0}^{m}q_{m,j}%
^{(\alpha,s)}(z)\frac{1}{(m+1)^{2}}\overset{1}{\underset{0}{%
%TCIMACRO{\dint }%
%BeginExpansion
{\displaystyle\int}
%EndExpansion
}}(1-t)^{\eta-1}t^{2\gamma}dt\\
&  =\frac{\Gamma(\eta+1)m^{2}}{\Gamma(\eta)(m+1)^{2}}\sum\limits_{j=0}%
^{m}q_{m,j}^{(\alpha,s)}(z)\frac{j^{2}}{m^{2}}B(\eta,1)\\
&  +\frac{2\Gamma(\eta+1)m}{\Gamma(\eta)(m+1)^{2}}\sum\limits_{j=0}^{m}%
q_{m,j}^{(\alpha,s)}(z)\frac{j}{m}B(\eta,\gamma+1)\\
&  +\frac{\Gamma(\eta+1)}{\Gamma(\eta)(m+1)^{2}}\sum\limits_{j=0}^{m}%
q_{m,j}^{(\alpha,s)}(z)B(\eta,2\gamma+1)\\
&  =\frac{m^{2}}{(m+1)^{2}}z^{2}+\frac{z(1-z)\left[  m+(1-\alpha
)s(s-1)\right]  }{(m+1)^{2}}\\
&  +\frac{2m}{(m+1)^{2}}\frac{\eta!\gamma!}{(\eta+\gamma)!}z+\frac
{1}{(m+1)^{2}}\frac{\eta!(2\gamma)!}{(\eta+2\gamma)!}.
\end{align*}

Hence, the proof of Lemma \ref{L2} is obtained.
\end{proof}

\begin{corollary}
\label{C1} Let $\xi_{u}=(t-z)^{u},u={1,2}$, then for the operators (\ref{e7})
we get
\begin{align*}
\Re_{m,\eta,\gamma}^{(\alpha,s)}(\xi_{1};z)  &  =\frac{1}{m+1}\frac
{\eta!\gamma!}{(\eta+\gamma)!}-\frac{z}{m+1}=:\zeta_{m,\eta,\gamma}%
^{(\alpha,s)}(z),\\
\Re_{m,\eta,\gamma}^{(\alpha,s)}(\xi_{2};z)  &  =\frac{z^{2}}{(m+1)^{2}}%
+\frac{z(1-z)\left[  m+(1-\alpha)s(s-1)\right]  }{(m+1)^{2}}\\
&  -\frac{2z}{m+1}\frac{\eta!\gamma!}{(\eta+\gamma)!}+\frac{1}{(m+1)^{2}}%
\frac{\eta!(2\gamma)!}{(\eta+2\gamma)!}=:\varphi_{m,\eta,\gamma}^{(\alpha
,s)}(z).
\end{align*}

\end{corollary}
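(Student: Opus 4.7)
The plan is to obtain the two central moments directly from Lemma \ref{L2} by exploiting the linearity of the operator $\Re_{m,\eta,\gamma}^{(\alpha,s)}$. Writing $\xi_1(t)=t-z$ and $\xi_2(t)=(t-z)^2=t^2-2zt+z^2$, I would express
\[
\Re_{m,\eta,\gamma}^{(\alpha,s)}(\xi_{1};z)=\Re_{m,\eta,\gamma}^{(\alpha,s)}(e_{1};z)-z\,\Re_{m,\eta,\gamma}^{(\alpha,s)}(e_{0};z),
\]
\[
\Re_{m,\eta,\gamma}^{(\alpha,s)}(\xi_{2};z)=\Re_{m,\eta,\gamma}^{(\alpha,s)}(e_{2};z)-2z\,\Re_{m,\eta,\gamma}^{(\alpha,s)}(e_{1};z)+z^{2}\,\Re_{m,\eta,\gamma}^{(\alpha,s)}(e_{0};z),
\]
and then insert the explicit formulas for $\Re_{m,\eta,\gamma}^{(\alpha,s)}(e_{i};z)$, $i=0,1,2$, that were derived in Lemma \ref{L2}.

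For the first assertion the computation is essentially one line: combining $\frac{m}{m+1}z-z=-\frac{z}{m+1}$ with the constant contribution yields the required $\zeta_{m,\eta,\gamma}^{(\alpha,s)}(z)$. For the second assertion I would group the resulting terms by the powers of $z$ and by the Gamma-function factors. The coefficient of $z^{2}$ collapses via the identity $\frac{m^{2}}{(m+1)^{2}}-\frac{2m}{m+1}+1=\frac{1}{(m+1)^{2}}$, so the leading $z^{2}/(m+1)^{2}$ piece announced in the corollary appears naturally; the term carrying $(1-\alpha)s(s-1)$ is already isolated inside the expression for $\Re_{m,\eta,\gamma}^{(\alpha,s)}(e_{2};z)$ and is copied verbatim; and the remaining pieces involving the Beta-type constants $\frac{\eta!\gamma!}{(\eta+\gamma)!}$ and $\frac{\eta!(2\gamma)!}{(\eta+2\gamma)!}$ are simply collected.

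No new analytic input is required beyond Lemma \ref{L2} and the linearity of the operator. The only thing that requires a bit of care—and arguably the single real step in the proof—is the algebraic simplification of the $z^{2}$ coefficient in the second moment, where three terms of mixed denominators combine to produce a clean $1/(m+1)^{2}$; every other term in the stated expressions is a direct transcription from Lemma \ref{L2}. Consequently, after carrying out these cancellations one arrives at the definitions of $\zeta_{m,\eta,\gamma}^{(\alpha,s)}(z)$ and $\varphi_{m,\eta,\gamma}^{(\alpha,s)}(z)$ as claimed.
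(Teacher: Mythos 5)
Your method --- expanding $\xi_{1}=e_{1}-z e_{0}$ and $\xi_{2}=e_{2}-2z e_{1}+z^{2}e_{0}$ by linearity and substituting the moments of Lemma \ref{L2} --- is exactly the route the paper implicitly takes; the corollary is stated without proof precisely because this is the standard one-line deduction. Your treatment of the first central moment and of the $z^{2}$-coefficient identity $\frac{m^{2}}{(m+1)^{2}}-\frac{2m}{m+1}+1=\frac{1}{(m+1)^{2}}$ is correct.

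However, the step you dismiss as ``the remaining pieces \dots are simply collected'' is where the argument, carried out honestly, does not land on the stated formula. The two terms carrying the factor $\frac{\eta!\gamma!}{(\eta+\gamma)!}\,z$ are $+\frac{2m}{(m+1)^{2}}\frac{\eta!\gamma!}{(\eta+\gamma)!}z$ (from $\Re_{m,\eta,\gamma}^{(\alpha,s)}(e_{2};z)$) and $-\frac{2z}{m+1}\frac{\eta!\gamma!}{(\eta+\gamma)!}$ (from $-2z\,\Re_{m,\eta,\gamma}^{(\alpha,s)}(e_{1};z)$), and their sum is
\[
2z\,\frac{\eta!\gamma!}{(\eta+\gamma)!}\left(\frac{m}{(m+1)^{2}}-\frac{1}{m+1}\right)=-\frac{2z}{(m+1)^{2}}\,\frac{\eta!\gamma!}{(\eta+\gamma)!},
\]
whereas the corollary records $-\frac{2z}{m+1}\frac{\eta!\gamma!}{(\eta+\gamma)!}$. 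So either you must flag this as a misprint in the statement (the $(m+1)^{2}$ version is the correct one: with the coefficient $\frac{2z}{m+1}$ the quantity $\varphi_{m,\eta,\gamma}^{(\alpha,s)}(z)$ would become negative for $z$ near $1$ and $m$ large, which is impossible for the second central moment of a positive operator), or your proof silently asserts an identity that is false. The idea is right; what is missing is carrying out this second combination of mixed denominators with the same care you gave the $z^{2}$ coefficient, and noting that it does not reproduce the displayed expression verbatim.
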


\begin{remark}
For the operators (\ref{e7}), we arrive at the following relations:

$\vartriangleright$ When $\gamma=1,$ $s=2,$ then (\ref{e7}) turn into the
operators proposed by Berwal et al. \cite{berwalapproximation}.

$\vartriangleright$ When $\eta=1$, then (\ref{e7}) turn into the operators
given by Baytun{\c{c}} et al. \cite{baytunc}.

$\vartriangleright$ When $\gamma=\eta=1,$ $s=2,$ then (\ref{e7}) turn into the
operators given by Mohiuddine et al. \cite{samtaaammas}$.$

$\vartriangleright$ When $\alpha=\eta=1,$ $s=2,$ then (\ref{e7}) turn into the
operators given by \"{O}zarslan et al. \cite{ozarslan}$.$
\end{remark}

\section{Direct and Local estimates of $\Re_{m,\eta,\gamma}^{(\alpha,s)}$}

\label{sec3}

Let the space $C[0,1]$ denote the real-valued continuous functions on $[0,1]$
and which is endowed with the sup-norm $\left\Vert \phi\right\Vert
_{C[0,1]}=\underset{z\in\lbrack0,1]}{\sup}\left\vert \phi(z)\right\vert .$

\begin{theorem}
\label{u1} For operators given by
(\ref{e7}), one get following relation
\[
\left\vert \Re_{m,\eta,\gamma}^{(\alpha,s)}(\phi;z)\right\vert \leq\left\Vert
\phi\right\Vert .
\]

\begin{proof}
From operators (\ref{e7}) and by Lemma \ref{L2}, we arrive
at
\begin{align*}
\left\vert \Re_{m,\eta,\gamma}^{(\alpha,s)}(\phi;z)\right\vert  &  =\left\vert
\Gamma(\eta+1)\sum\limits_{j=0}^{m}q_{m,j}^{(\alpha,s)}(z)\overset
{1}{\underset{0}{%
%TCIMACRO{\dint }%
%BeginExpansion
{\displaystyle\int}
%EndExpansion
}}\frac{(1-t)^{\eta-1}}{\Gamma(\eta)}\phi\left(  \frac{j+t^{\gamma}}%
{m+1}\right)  dt\right\vert \\
&  \leq\Gamma(\eta+1)\sum\limits_{j=0}^{m}q_{m,j}^{(\alpha,s)}(z)\overset
{1}{\underset{0}{%
%TCIMACRO{\dint }%
%BeginExpansion
{\displaystyle\int}
%EndExpansion
}}\frac{(1-t)^{\eta-1}}{\Gamma(\eta)}\left\vert \phi\left(  \frac{j+t^{\gamma
}}{m+1}\right)  \right\vert dt\\
&  \leq\left\Vert \phi\right\Vert \Gamma(\eta+1)\sum\limits_{j=0}^{m}%
q_{m,j}^{(\alpha,s)}(z)\overset{1}{\underset{0}{%
%TCIMACRO{\dint }%
%BeginExpansion
{\displaystyle\int}
%EndExpansion
}}\frac{(1-t)^{\eta-1}}{\Gamma(\eta)}dt\\
&  =\left\Vert \phi\right\Vert \Re_{m,\eta,\gamma}^{(\alpha,s)}(1;z)\\
&  =\left\Vert \phi\right\Vert ,
\end{align*}
which completes the assertion.
\end{proof}
\end{theorem}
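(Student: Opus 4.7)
The plan is to treat this as the standard boundedness estimate for a positive linear operator that reproduces constants. Since Lemma \ref{L2} already supplies the key fact that $\Re_{m,\eta,\gamma}^{(\alpha,s)}(e_0;z)=1$, the bound $\|\phi\|$ should drop out after applying the triangle inequality to the defining sum-integral representation (\ref{e7}) and using the uniform bound $|\phi|\le\|\phi\|$.

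Concretely, I would first write out (\ref{e7}) for $\Re_{m,\eta,\gamma}^{(\alpha,s)}(\phi;z)$ and pass $|\cdot|$ inside the finite sum over $j$ and the integral over $t$. This move is legitimate because the basis functions $q_{m,j}^{(\alpha,s)}(z)$ and the kernel $(1-t)^{\eta-1}/\Gamma(\eta)$ are nonnegative on $[0,1]$: the kernel is nonnegative for $\eta>0$, and $q_{m,j}^{(\alpha,s)}(z)$ is either an ordinary Bernstein basis term (when $m<s$) or a convex combination with coefficients $1-\alpha,\,1-\alpha,\,\alpha\ge0$ (when $m\ge s$), all of which are nonnegative for $z\in[0,1]$ and $0\le\alpha\le 1$. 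Next, I would replace $|\phi((j+t^\gamma)/(m+1))|$ by its sup-norm bound $\|\phi\|$ and factor this constant outside both the sum and the integral.

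At that point the remaining expression is precisely $\Re_{m,\eta,\gamma}^{(\alpha,s)}(e_0;z)$, which by Lemma \ref{L2} equals $1$. Combining the pieces gives $|\Re_{m,\eta,\gamma}^{(\alpha,s)}(\phi;z)|\le\|\phi\|$ as required.

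I do not expect any genuine obstacle: the argument is essentially mechanical once the positivity of the basis functions and the zeroth moment identity are in hand. The only point requiring mild care is to confirm nonnegativity of $q_{m,j}^{(\alpha,s)}(z)$ from the piecewise definition in (\ref{e2}); this is where the parameter range $0\le\alpha\le 1$ is used. Everything else follows from a single application of the triangle inequality together with Lemma \ref{L2}.
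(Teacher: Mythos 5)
Your proposal is correct and follows essentially the same route as the paper: pass the absolute value inside the sum and integral using positivity of the basis functions and the kernel, bound $\left\vert \phi\right\vert$ by $\left\Vert \phi\right\Vert$, and conclude via $\Re_{m,\eta,\gamma}^{(\alpha,s)}(e_{0};z)=1$ from Lemma \ref{L2}. The only difference is that you explicitly justify the nonnegativity of $q_{m,j}^{(\alpha,s)}(z)$ from the piecewise definition, a detail the paper takes for granted.
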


\begin{theorem}
Let $\phi\in$ $C[0,1],$ then operators given by
(\ref{e7})
\[
\underset{m\rightarrow\infty}{\lim}\Re_{m,\eta,\gamma}^{(\alpha,s)}%
(\phi;z)=\phi(z),
\]
convergence uniformly on $[0,1]$.

\begin{proof}
According to the well-known Bohman-Korovkin theorem \cite{korovkin1953}, it is enough to confirm that
\[
\underset{m\rightarrow\infty}{\lim}\underset{z\in\lbrack0,1]}{\max}\left\vert
\Re_{m,\eta,\gamma}^{(\alpha,s)}(e_{l};y)-e_{l}\right\vert =0,\text{ for
}l=0,1,2.
\]
It follows immediately via Lemma \ref{L2}. Thus, desired result
is completed.
\end{proof}
\end{theorem}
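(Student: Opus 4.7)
The plan is to invoke the Bohman–Korovkin theorem, which states that a sequence of positive linear operators $T_m : C[0,1] \to C[0,1]$ converges uniformly to the identity on $C[0,1]$ provided $T_m(e_l;\cdot) \to e_l$ uniformly on $[0,1]$ for $l = 0,1,2$. So the proof reduces to two ingredients: (i) check that $\Re_{m,\eta,\gamma}^{(\alpha,s)}$ is a positive linear operator, and (ii) verify uniform convergence on each of the three test monomials using the explicit formulas from Lemma \ref{L2}.

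For (i), linearity is immediate from the definition (\ref{e7}) since the operator is a finite sum of integrals linear in $\phi$. Positivity requires that the basis functions $q_{m,j}^{(\alpha,s)}(z)$ be nonnegative on $[0,1]$ (which holds by construction for $0 \le \alpha \le 1$, $z \in [0,1]$) and that the integrand $\frac{(1-t)^{\eta-1}}{\Gamma(\eta)}$ be nonnegative on $[0,1]$, which is clear for $\eta > 0$.

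For (ii), Lemma \ref{L2} already gives $\Re_{m,\eta,\gamma}^{(\alpha,s)}(e_0;z) = 1$, so the case $l=0$ is trivial. For $l=1$, observe that
\[
\bigl|\Re_{m,\eta,\gamma}^{(\alpha,s)}(e_1;z) - z\bigr| = \left|\frac{1}{m+1}\frac{\eta!\gamma!}{(\eta+\gamma)!} - \frac{z}{m+1}\right| \le \frac{1}{m+1}\left(1 + \frac{\eta!\gamma!}{(\eta+\gamma)!}\right),
\]
which is independent of $z$ and tends to zero as $m \to \infty$. For $l=2$, the difference $\Re_{m,\eta,\gamma}^{(\alpha,s)}(e_2;z) - z^2$ is a polynomial in $z$ of degree at most two whose coefficients are linear combinations of $\frac{1}{m+1}$, $\frac{1}{(m+1)^2}$, and $\frac{(1-\alpha)s(s-1)}{(m+1)^2}$; each of these vanishes as $m\to\infty$, and since $z \in [0,1]$ we can bound $|z|, |z^2|, |z(1-z)|$ by $1$ and obtain a uniform bound that goes to zero.

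No step presents a genuine obstacle, since the key work has already been done in Lemma \ref{L2}: the only mild subtlety is confirming that the convergence $T_m(e_l;z) \to e_l(z)$ is indeed uniform rather than just pointwise, but this is immediate from the polynomial form on the compact interval $[0,1]$. Combining the positivity of the operators with these three uniform limits and applying the Bohman–Korovkin theorem yields the claim.
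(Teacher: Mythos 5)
Your proposal follows exactly the same route as the paper: apply the Bohman--Korovkin theorem and reduce to the uniform convergence of the three test monomials, which Lemma \ref{L2} supplies. You simply spell out the details (positivity, linearity, and the explicit uniform bounds for $l=1,2$) that the paper leaves as ``immediate,'' so the argument is correct and essentially identical.
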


Before exploring the direct and local approximation results of the operators (\ref{e7}), we first introduce some notation and revisit key definitions from the literature. Let the Peetre's K-functional is expressed as:%

\[
K_{2}(\phi,\lambda)=\underset{\kappa\in C^{2}[0,1]}{\inf}\left\{  \left\Vert
\phi-\kappa\right\Vert +\lambda\left\Vert \kappa^{\prime\prime}\right\Vert
\right\}  ,
\]

where $\lambda>0$ and $C^{2}[0,1]=\left\{  \kappa\in C[0,1]:\kappa^{\prime
},\kappa^{\prime\prime}\in C[0,1]\right\}  $.

From \cite{DeVore-1993}, there exist an absolute constant $C>0$ such that%
\begin{equation}
K_{2}(\phi;\lambda)\leq C\omega_{2}(\phi;\sqrt{\lambda}),\text{
\ \ \ \ \ \ \ }\lambda>0, \label{e10}%
\end{equation}
where $\omega_{2}(\phi;\lambda)=\underset{0<s\leq\lambda}{\sup}$
$\underset{u\in\lbrack0,1]}{\sup}\left\vert \phi(u+2s)-2\phi(u+s)+\phi
(u)\right\vert $ is the second-order modulus of smoothness of $\phi\in
C[0,1].$ Moreover, we demonstrate the usual modulus of continuity as:%
\[
\omega(\phi;\lambda):=\underset{0<\vartheta\leq\lambda}{\sup}\text{ }%
\underset{z\in\lbrack0,1]}{\sup}\left\vert \phi(u+\vartheta)-\phi
(u)\right\vert .
\]
Also, the Lipschitz function class $Lip_{M}(\kappa),$ where $M>0$ and
$0<\kappa\leq1$. If
\[
\left\vert \phi(w)-\phi(q)\right\vert \leq M\left\vert w-q\right\vert
^{\kappa},\text{ \ \ \ \ \ }(w,q\in%
%TCIMACRO{\U{211d} }%
%BeginExpansion
\mathbb{R}
%EndExpansion
).
\]

\begin{theorem}
\label{t2} Let operators $\Re_{m,\eta,\gamma}^{(\alpha,s)}(.;.)$ be given by
(\ref{e7}) and $\phi\in C[0,1],z\in\lbrack0,1]$. Then, we have%
\[
\left\vert \Re_{m,\eta,\gamma}^{(\alpha,s)}(\phi;z)-\phi(z)\right\vert
\leq2\omega(\phi;\sqrt{\varphi_{m,\eta,\gamma}^{(\alpha,s)}(z)}),
\]
where $\varphi_{m,\eta,\gamma}^{(\alpha,s)}(z)=\Re_{m,\eta,\gamma}%
^{(\alpha,s)}(\phi_{2};z)$ given by Corollary \ref{C1}.
\end{theorem}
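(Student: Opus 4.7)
The plan is to prove this bound using the classical linearization technique for the modulus of continuity combined with the Cauchy–Schwarz inequality applied to the positive linear operator $\Re_{m,\eta,\gamma}^{(\alpha,s)}$. The key tool is the well-known inequality
$$\left|\phi(t)-\phi(z)\right| \leq \left(1 + \delta^{-1}|t-z|\right)\omega(\phi;\delta),$$
which holds for every $\delta>0$ and every $t,z\in[0,1]$, and which converts pointwise control of $\phi$ into a statement about the first absolute central moment of the operator.

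First, I would use that $\Re_{m,\eta,\gamma}^{(\alpha,s)}$ is positive and linear with $\Re_{m,\eta,\gamma}^{(\alpha,s)}(e_{0};z)=1$ (Lemma \ref{L2}) to write
$$\left|\Re_{m,\eta,\gamma}^{(\alpha,s)}(\phi;z)-\phi(z)\right| \leq \Re_{m,\eta,\gamma}^{(\alpha,s)}\!\left(\left|\phi(t)-\phi(z)\right|;z\right).$$
Inserting the modulus-of-continuity estimate inside the operator and again applying linearity and positivity yields
$$\left|\Re_{m,\eta,\gamma}^{(\alpha,s)}(\phi;z)-\phi(z)\right| \leq \omega(\phi;\delta)\left[1 + \delta^{-1}\,\Re_{m,\eta,\gamma}^{(\alpha,s)}(|t-z|;z)\right].$$

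Next, I would bound the first absolute central moment by the Cauchy–Schwarz inequality for positive linear operators, namely
$$\Re_{m,\eta,\gamma}^{(\alpha,s)}(|t-z|;z) \leq \sqrt{\Re_{m,\eta,\gamma}^{(\alpha,s)}(\xi_{2};z)}\,\sqrt{\Re_{m,\eta,\gamma}^{(\alpha,s)}(e_{0};z)} = \sqrt{\varphi_{m,\eta,\gamma}^{(\alpha,s)}(z)},$$
where the second central moment is identified via Corollary \ref{C1}. Finally, the optimal choice $\delta=\sqrt{\varphi_{m,\eta,\gamma}^{(\alpha,s)}(z)}$ collapses the bracketed factor to $2$, which delivers the claimed inequality.

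The argument is entirely routine and there is no substantive obstacle; the only points that deserve care are (i) writing the Cauchy–Schwarz step cleanly so that the right-hand side is correctly identified with the quantity $\varphi_{m,\eta,\gamma}^{(\alpha,s)}(z)$ from Corollary \ref{C1}, and (ii) observing that the symbol $\phi_{2}$ appearing in the theorem's statement should be read as the second central test function $\xi_{2}=(t-z)^{2}$, not as the function $\phi$ being approximated.
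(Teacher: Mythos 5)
Your proposal is correct and follows essentially the same route as the paper: the pointwise modulus-of-continuity inequality, application of the positive linear operator, the Cauchy--Schwarz bound $\Re_{m,\eta,\gamma}^{(\alpha,s)}(|t-z|;z)\leq\sqrt{\varphi_{m,\eta,\gamma}^{(\alpha,s)}(z)}$, and the choice $\delta=\sqrt{\varphi_{m,\eta,\gamma}^{(\alpha,s)}(z)}$. Your observation that the symbol $\phi_{2}$ in the statement should be read as the central test function $\xi_{2}=(t-z)^{2}$ from Corollary \ref{C1} is also correct.
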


\begin{proof}
Using the relation $\left\vert \phi(t)-\phi(z)\right\vert \leq\left(
1+\frac{\left\vert t-z\right\vert }{\delta}\right)  \omega(\phi;\delta)$ and
operating $\Re_{m,\eta,\gamma}^{(\alpha,s)}(.;z),$ we arrive
\[
\left\vert \Re_{m,\eta,\gamma}^{(\alpha,s)}(\phi;z)-\phi(z)\right\vert
\leq\left(  1+\frac{1}{\delta}\Re_{m,\eta,\gamma}^{(\alpha,s)}(\left\vert
t-z\right\vert ;z)\right)  \omega(\phi;\delta).
\]
Implementing the Cauchy-Schwarz inequality and by Corollary \ref{C1}, we
derive
\begin{align*}
\left\vert \Re_{m,\eta,\gamma}^{(\alpha,s)}(\phi;z)-\phi(z)\right\vert  &
\leq\left(  1+\frac{1}{\delta}\sqrt{\Re_{m,\eta,\gamma}^{(\alpha,s)}%
((t-z)^{2};z)}\right)  \omega(\phi;\delta)\\
&  \leq\left(  1+\frac{1}{\delta}\sqrt{\varphi_{m,\eta,\gamma}^{(\alpha
,s)}(z)}\right)  \omega(\phi;\delta).
\end{align*}
Taking $\delta=\sqrt{\varphi_{m,\eta,\gamma}^{(\alpha,s)}(z)},$ which ends
the proof.
\end{proof}

\begin{theorem}
\label{Th.3} Let $\phi\in C[0,1],z\in\lbrack0,1]$ and $\phi\in Lip_{M}%
(\kappa)$. Then, we have
\[
\left\vert \Re_{m,\eta,\gamma}^{(\alpha,s)}(\phi;z)-\phi(z)\right\vert \leq
M(\varphi_{m,\eta,\gamma}^{(\alpha,s)}(z))^{\frac{\kappa}{2}}.
\]

\begin{proof}
Using the properties of linearity and positivity of operators (\ref{e7}), we
yield
\begin{align*}
&  \left\vert \Re_{m,\eta,\gamma}^{(\alpha,s)}(\phi;z)-\phi(z)\right\vert
\leq\Re_{m,\eta,\gamma}^{(\alpha,s)}(\left\vert \phi(t)-\phi(z)\right\vert
;z)\\
&  =\Gamma(\eta+1)\sum\limits_{j=0}^{m}q_{m,j}^{(\alpha,s)}(z)\overset
{1}{\underset{0}{%
%TCIMACRO{\dint }%
%BeginExpansion
{\displaystyle\int}
%EndExpansion
}}\frac{(1-t)^{\eta-1}}{\Gamma(\eta)}\left\vert \phi\left(  \frac{j+t^{\gamma
}}{m+1}\right)  -\phi(z)\right\vert dt\\
&  \leq M\left(  \Gamma(\eta+1)\sum\limits_{j=0}^{m}q_{m,j}^{(\alpha
,s)}(z)\overset{1}{\underset{0}{%
%TCIMACRO{\dint }%
%BeginExpansion
{\displaystyle\int}
%EndExpansion
}}\frac{(1-t)^{\eta-1}}{\Gamma(\eta)}\text{ }\left\vert \frac{j+t^{\gamma}%
}{m+1}-z\right\vert ^{\kappa}dt\right)  .\text{ }%
\end{align*}
By applying Hölder's inequality with \( p_1 = \frac{2}{\kappa} \) and \( p_2 = \frac{2}{2-\kappa} \), and considering Corollary \ref{C1} and Lemma \ref{L2}, we can express it as follows:
\begin{align*}
&  \left\vert \Re_{m,\eta,\gamma}^{(\alpha,s)}(\phi;z)-\phi(z)\right\vert \\
&  \leq M\left(  \Gamma(\eta+1)\sum\limits_{j=0}^{m}q_{m,j}^{(\alpha
,s)}(z)\left(  \overset{1}{\underset{0}{%
%TCIMACRO{\dint }%
%BeginExpansion
{\displaystyle\int}
%EndExpansion
}}\frac{(1-t)^{\eta-1}}{\Gamma(\eta)}\text{ }\left\vert \frac{j+t^{\gamma}%
}{m+1}-z\right\vert ^{2}dt\right)  ^{^{\frac{\kappa}{2}}}\left(  \overset
{1}{\underset{0}{%
%TCIMACRO{\dint }%
%BeginExpansion
{\displaystyle\int}
%EndExpansion
}}\frac{(1-t)^{\eta-1}}{\Gamma(\eta)}dt\right)  ^{\frac{2-\kappa}{2}}\right)
\\
&  \leq M\left(  \Gamma(\eta+1)\sum\limits_{j=0}^{m}q_{m,j}^{(\alpha
,s)}(z)\overset{1}{\underset{0}{%
%TCIMACRO{\dint }%
%BeginExpansion
{\displaystyle\int}
%EndExpansion
}}\frac{(1-t)^{\eta-1}}{\Gamma(\eta)}\text{ }\left\vert \frac{j+t^{\gamma}%
}{m+1}-z\right\vert ^{2}dt\right)  ^{^{\frac{\kappa}{2}}}\\
&  \times\left(  \Gamma(\eta+1)\sum\limits_{j=0}^{m}q_{m,j}^{(\alpha
,s)}(z)\overset{1}{\underset{0}{%
%TCIMACRO{\dint }%
%BeginExpansion
{\displaystyle\int}
%EndExpansion
}}\frac{(1-t)^{\eta-1}}{\Gamma(\eta)}dt\right)  ^{\frac{2-\kappa}{2}}\\
&  \leq M\left(  \Re_{m,\eta,\gamma}^{(\alpha,s)}((t-z)^{2};z)\right)
^{\frac{\kappa}{2}}\left(  \Re_{m,\eta,\gamma}^{(\alpha,s)}(1;z)\right)
^{\frac{2-\kappa}{2}}=M(\varphi_{m,\eta,\gamma}^{(\alpha,s)}(z))^{\frac
{\kappa}{2}},
\end{align*}
this proves the Theorem \ref{Th.3}.
\end{proof}
\end{theorem}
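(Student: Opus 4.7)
The plan is to exploit the linearity and positivity of $\Re_{m,\eta,\gamma}^{(\alpha,s)}$ together with the Lipschitz hypothesis on $\phi$. First I would use $\Re_{m,\eta,\gamma}^{(\alpha,s)}(e_{0};z)=1$ from Lemma \ref{L2} to rewrite the error as the operator applied to $\phi(t)-\phi(z)$ and then bring the absolute value inside, obtaining
\[
\left|\Re_{m,\eta,\gamma}^{(\alpha,s)}(\phi;z)-\phi(z)\right|
\le \Re_{m,\eta,\gamma}^{(\alpha,s)}\bigl(|\phi(t)-\phi(z)|;z\bigr)
\le M\,\Re_{m,\eta,\gamma}^{(\alpha,s)}\bigl(|t-z|^{\kappa};z\bigr),
\]
where the last step invokes the defining inequality of $Lip_{M}(\kappa)$. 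The problem is thereby reduced to controlling a fractional central moment of order $\kappa\in(0,1]$.

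The key technical step is to pass from this $\kappa$-th central moment to the second central moment $\varphi_{m,\eta,\gamma}^{(\alpha,s)}(z)$ that is already computed in Corollary \ref{C1}. For this I would view the combined weight $\Gamma(\eta+1)\,q_{m,j}^{(\alpha,s)}(z)\,\frac{(1-t)^{\eta-1}}{\Gamma(\eta)}$ as a probability density on $\{0,\dots,m\}\times[0,1]$ (its total mass equals $\Re_{m,\eta,\gamma}^{(\alpha,s)}(e_{0};z)=1$) and apply Hölder's inequality with the conjugate exponents $p_{1}=2/\kappa$ and $p_{2}=2/(2-\kappa)$. This yields
\[
\Re_{m,\eta,\gamma}^{(\alpha,s)}\bigl(|t-z|^{\kappa};z\bigr)
\le \bigl(\Re_{m,\eta,\gamma}^{(\alpha,s)}((t-z)^{2};z)\bigr)^{\kappa/2}
\bigl(\Re_{m,\eta,\gamma}^{(\alpha,s)}(e_{0};z)\bigr)^{(2-\kappa)/2},
\]
and since the second factor equals $1$ and the first is exactly $\varphi_{m,\eta,\gamma}^{(\alpha,s)}(z)^{\kappa/2}$ by Corollary \ref{C1}, the desired bound follows upon multiplying by the constant $M$.

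The main obstacle I foresee is the correct bookkeeping when applying Hölder's inequality to the hybrid sum--integral structure of $\Re_{m,\eta,\gamma}^{(\alpha,s)}$: one must split the kernel as $(\text{weight})^{\kappa/2}\cdot(\text{weight})^{(2-\kappa)/2}$ so that the two factors recombine to the full measure and the exponents sum to $1$. Once this splitting is carried out rigorously, no tool beyond Lemma \ref{L2}, Corollary \ref{C1} and the Lipschitz estimate is needed, and the conclusion drops out immediately.
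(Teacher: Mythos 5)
Your proposal is correct and follows essentially the same route as the paper: reduce to the $\kappa$-th central moment via the Lipschitz bound, then apply H\"older's inequality with exponents $2/\kappa$ and $2/(2-\kappa)$ against the normalized sum--integral kernel, and invoke Corollary \ref{C1}. The only cosmetic difference is that you apply H\"older once to the product (counting $\times$ Beta) measure, whereas the paper applies it in two stages (first to the inner integral, then to the outer sum), which yields the same bound.
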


\begin{theorem}
For the operators $\Re_{m,\eta,\gamma}^{(\alpha,s)},$ we obtain
\[
\left\vert \Re_{m,\eta,\gamma}^{(\alpha,s)}(\phi;z)-\phi(z)\right\vert \leq
C\omega_{2}(\phi;\frac{1}{2}\sqrt{\varphi_{m,\eta,\gamma}^{(\alpha
,s)}(z)+(\zeta_{m,\eta,\gamma}^{(\alpha,s)}(z))^{2}}+\omega(\phi;\zeta
_{m,\eta,\gamma}^{(\alpha,s)}(z)),
\]
where a constant $C>0$, $\zeta_{m,\eta,\gamma}^{(\alpha,s)}(z)$ and
$\varphi_{m,\eta,\gamma}^{(\alpha,s)}(z)$ are given by Corollary \ref{C1}.

\begin{proof}
Considering the following equation
\begin{equation}
\widehat{\Re_{m,\eta,\gamma}^{(\alpha,s)}}(\phi;z)=\phi(z)+\Re_{m,\eta,\gamma
}^{(\alpha,s)}(\phi;z)-\phi(\frac{m}{m+1}z+\frac{1}{m+1}\frac{\eta!\gamma
!}{(\eta+\gamma)!}). \label{e11}%
\end{equation}
Then, by Lemma \ref{L2}
\[
\widehat{\Re_{m,\eta,\gamma}^{(\alpha,s)}}(t-z;z)=0.
\]
In view of Taylor's formula, then
\begin{equation}
\Phi(t)=\Phi(z)+(t-z)\Phi^{\prime}(z)+\underset{z}{\overset{t}{%
%TCIMACRO{\dint }%
%BeginExpansion
{\displaystyle\int}
%EndExpansion
}}(t-u)\Phi^{\prime\prime}(u)du,\text{ \ \ }(\Phi\in C^{2}[0,1]). \label{e12}%
\end{equation}
Operating $\widehat{\Re_{m,\eta,\gamma}^{(\alpha,s)}}(.;z)$ to (\ref{e12}),
yields
\begin{align*}
&  \widehat{\Re_{m,\eta,\gamma}^{(\alpha,s)}}(\Phi;z)-\Phi(z)=\widehat
{\Re_{m,\eta,\gamma}^{(\alpha,s)}}((t-z)\Phi^{\prime}(z);z)+\widehat
{\Re_{m,\eta,\gamma}^{(\alpha,s)}}(\underset{z}{\overset{t}{%
%TCIMACRO{\dint }%
%BeginExpansion
{\displaystyle\int}
%EndExpansion
}}(t-u)\Phi^{\prime\prime}(u)du;z)\\
&  =\Phi^{\prime}(z)\widehat{\Re_{m,\eta,\gamma}^{(\alpha,s)}}(t-z;z)+\widehat
{\Re_{m,\eta,\gamma}^{(\alpha,s)}}(\underset{z}{\overset{t}{%
%TCIMACRO{\dint }%
%BeginExpansion
{\displaystyle\int}
%EndExpansion
}}(t-u)\Phi^{\prime\prime}(u)du;z)\\
&  -\underset{z}{\overset{\frac{m}{m+1}z+\frac{1}{m+1}\frac{\eta!\gamma
!}{(\eta+\gamma)!}}{%
%TCIMACRO{\dint }%
%BeginExpansion
{\displaystyle\int}
%EndExpansion
}}(\frac{m}{m+1}z+\frac{1}{m+1}\frac{\eta!\gamma!}{(\eta+\gamma)!}%
-u)\Phi^{\prime\prime}(u)du\\
&  =\Re_{m,\eta,\gamma}^{(\alpha,s)}(\underset{z}{\overset{t}{%
%TCIMACRO{\dint }%
%BeginExpansion
{\displaystyle\int}
%EndExpansion
}}(t-u)\Phi^{\prime\prime}(u)du;z)-\underset{z}{\overset{\frac{m}{m+1}%
z+\frac{1}{m+1}\frac{\eta!\gamma!}{(\eta+\gamma)!}}{%
%TCIMACRO{\dint }%
%BeginExpansion
{\displaystyle\int}
%EndExpansion
}}(\frac{m}{m+1}z+\frac{1}{m+1}\frac{\eta!\gamma!}{(\eta+\gamma)!}%
-u)\Phi^{\prime\prime}(u)du.
\end{align*}
Again using Lemma \ref{L2},
\begin{align*}
&  \left\vert \widehat{\Re_{m,\eta,\gamma}^{(\alpha,s)}}(\Phi;z)-\Phi
(z)\right\vert \\
&  \leq\left\vert \Re_{m,\eta,\gamma}^{(\alpha,s)}(\underset{z}{\overset{t}{%
%TCIMACRO{\dint }%
%BeginExpansion
{\displaystyle\int}
%EndExpansion
}}(t-u)\Phi^{\prime\prime}(u)du;z)\right\vert +\left\vert \underset
{z}{\overset{\frac{m}{m+1}z+\frac{1}{m+1}\frac{\eta!\gamma!}{(\eta+\gamma)!}}{%
%TCIMACRO{\dint }%
%BeginExpansion
{\displaystyle\int}
%EndExpansion
}}(\frac{m}{m+1}z+\frac{1}{m+1}\frac{\eta!\gamma!}{(\eta+\gamma)!}%
-u)\Phi^{\prime\prime}(u)du\right\vert \\
&  \leq\Re_{m,\eta,\gamma}^{(\alpha,s)}(\left\vert \underset{z}{\overset{t}{%
%TCIMACRO{\dint }%
%BeginExpansion
{\displaystyle\int}
%EndExpansion
}}(t-u)\right\vert \left\vert \Phi^{\prime\prime}(u)\right\vert
du;z)+\underset{z}{\overset{\frac{m}{m+1}z+\frac{1}{m+1}\frac{\eta!\gamma
!}{(\eta+\gamma)!}}{%
%TCIMACRO{\dint }%
%BeginExpansion
{\displaystyle\int}
%EndExpansion
}}\left\vert \frac{m}{m+1}z+\frac{1}{m+1}\frac{\eta!\gamma!}{(\eta+\gamma
)!}-u\right\vert \left\vert \Phi^{\prime\prime}(u)\right\vert du\\
&  \leq\left\Vert \Phi^{\prime\prime}\right\Vert \left\{  \Re_{m,\eta,\gamma
}^{(\alpha,s)}((t-z)^{2};z)+\left(  \frac{m}{m+1}z+\frac{1}{m+1}\frac
{\eta!\gamma!}{(\eta+\gamma)!}-z\right)  ^{2}\right\} \\
&  =\left\Vert \Phi^{\prime\prime}\right\Vert \left\{  \varphi_{m,\eta,\gamma
}^{(\alpha,s)}(z)+(\zeta_{m,\eta,\gamma}^{(\alpha,s)}(z))^{2}\right\}  .
\end{align*}
Considering (\ref{e7}) and (\ref{e11}), hence
\begin{align}
\left\vert \widehat{\Re_{m,\eta,\gamma}^{(\alpha,s)}}(\phi;z)\right\vert  &
\leq2\left\Vert \phi\right\Vert +\left\vert \Re_{m,\eta,\gamma}^{(\alpha
,s)}(\phi;z)\right\vert \nonumber\\
&  \leq2\left\Vert \phi\right\Vert +\left\Vert \phi\right\Vert \Re
_{m,\eta,\gamma}^{(\alpha,s)}(e_{0};z)\nonumber\\
&  \leq3\left\Vert \phi\right\Vert . \label{e13}%
\end{align}
Next, by using Theorem \ref{u1}, relations (\ref{e11}) and
(\ref{e13}) we find
\begin{align*}
\left\vert \Re_{m,\eta,\gamma}^{(\alpha,s)}(\phi;z)-\phi(z)\right\vert  &
\leq\left\vert \widehat{\Re_{m,\eta,\gamma}^{(\alpha,s)}}(\phi-\Phi
;z)-(\phi-\Phi)(z)\right\vert \\
&  +\left\vert \widehat{\Re_{m,\eta,\gamma}^{(\alpha,s)}}(\Phi;z)-\Phi
(z)\right\vert +\left\vert \phi(z)-\phi(\frac{m}{m+1}z+\frac{1}{m+1}\frac
{\eta!\gamma!}{(\eta+\gamma)!})\right\vert \\
&  \leq4\left\Vert \phi-\Phi\right\Vert +\left\{  \varphi_{m,\eta,\gamma
}^{(\alpha,s)}(z)+(\zeta_{m,\eta,\gamma}^{(\alpha,s)}(z))^{2}\right\}
\left\Vert \Phi^{\prime\prime}\right\Vert +\omega\left(  \phi;\zeta
_{m,\eta,\gamma}^{(\alpha,s)}(z)\right)  .
\end{align*}
If we take infimum over all functions $\Phi\in C^{2}[0,1]$ and by inequality
(\ref{e10}), hence
\begin{align*}
\left\vert \Re_{m,\eta,\gamma}^{(\alpha,s)}(\phi;z)-\phi(z)\right\vert  &
\leq4K_{2}(\phi;\frac{\left\{  \varphi_{m,\eta,\gamma}^{(\alpha,s)}%
(z)+(\zeta_{m,\eta,\gamma}^{(\alpha,s)}(z))^{2}\right\}  }{4})+\omega
(\phi;\zeta_{m,\eta,\gamma}^{(\alpha,s)}(z))\\
&  \leq C\omega_{2}(\phi;\frac{1}{2}\sqrt{\varphi_{m,\eta,\gamma}^{(\alpha
,s)}(z)+(\zeta_{m,\eta,\gamma}^{(\alpha,s)}(z))^{2}})+\omega(\phi
;\zeta_{m,\eta,\gamma}^{(\alpha,s)}(z)),
\end{align*}
which ends the assertion.
\end{proof}
\end{theorem}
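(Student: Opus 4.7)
The plan is to use the standard Ditzian-type argument based on an auxiliary operator that kills the first moment, Taylor's formula with integral remainder, and the equivalence between Peetre's $K$-functional and the second-order modulus of smoothness. First I would introduce the auxiliary operator
\[
\widehat{\Re_{m,\eta,\gamma}^{(\alpha,s)}}(\phi;z) := \Re_{m,\eta,\gamma}^{(\alpha,s)}(\phi;z) + \phi(z) - \phi\!\left(\tfrac{m}{m+1}z+\tfrac{1}{m+1}\tfrac{\eta!\gamma!}{(\eta+\gamma)!}\right),
\]
so that, by Lemma \ref{L2} and Corollary \ref{C1}, the operator $\widehat{\Re_{m,\eta,\gamma}^{(\alpha,s)}}$ reproduces constants and satisfies $\widehat{\Re_{m,\eta,\gamma}^{(\alpha,s)}}(t-z;z) = 0$. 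This cancellation is the whole point of introducing the hat operator. In addition, by Theorem \ref{u1}, it follows that $|\widehat{\Re_{m,\eta,\gamma}^{(\alpha,s)}}(\phi;z)| \leq 3\|\phi\|$.

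Next, for $\Phi \in C^2[0,1]$ I would invoke Taylor's expansion $\Phi(t) = \Phi(z) + (t-z)\Phi'(z) + \int_{z}^{t}(t-u)\Phi''(u)\,du$ and apply $\widehat{\Re_{m,\eta,\gamma}^{(\alpha,s)}}(\cdot;z)$ to both sides. The linear term is annihilated by the vanishing of the first central moment of the hat operator, so only the integral remainders survive. Bounding the two integrals by $\|\Phi''\|$ times the corresponding squared-argument functional, and using Corollary \ref{C1}, I would arrive at the key estimate
\[
\left|\widehat{\Re_{m,\eta,\gamma}^{(\alpha,s)}}(\Phi;z) - \Phi(z)\right| \leq \left\{\varphi_{m,\eta,\gamma}^{(\alpha,s)}(z) + \bigl(\zeta_{m,\eta,\gamma}^{(\alpha,s)}(z)\bigr)^2\right\}\|\Phi''\|.
\]

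To pass from $\Phi$ to an arbitrary $\phi \in C[0,1]$, I would write the triangle decomposition
\[
\Re_{m,\eta,\gamma}^{(\alpha,s)}(\phi;z) - \phi(z) = \widehat{\Re_{m,\eta,\gamma}^{(\alpha,s)}}(\phi-\Phi;z) - (\phi-\Phi)(z) + \widehat{\Re_{m,\eta,\gamma}^{(\alpha,s)}}(\Phi;z) - \Phi(z) + \phi(z) - \phi\!\left(\tfrac{m}{m+1}z+\tfrac{1}{m+1}\tfrac{\eta!\gamma!}{(\eta+\gamma)!}\right),
\]
bound the first piece by $4\|\phi-\Phi\|$ via the uniform boundedness of $\widehat{\Re_{m,\eta,\gamma}^{(\alpha,s)}}$, bound the second piece by the Taylor estimate above, and bound the third piece by $\omega(\phi;\zeta_{m,\eta,\gamma}^{(\alpha,s)}(z))$ since the shift equals $\zeta_{m,\eta,\gamma}^{(\alpha,s)}(z)$. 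Taking the infimum over $\Phi \in C^2[0,1]$ produces a $K_2$-bound, and the conclusion then follows from the classical inequality $K_2(\phi;\lambda) \leq C\,\omega_2(\phi;\sqrt{\lambda})$ from \cite{DeVore-1993}.

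The main obstacle is the careful bookkeeping of the extra boundary term produced when Taylor's integral remainder is combined with the shift in the hat operator: one must correctly identify the contribution of $\int_{z}^{\frac{m}{m+1}z+\frac{1}{m+1}\frac{\eta!\gamma!}{(\eta+\gamma)!}}(\cdots)\Phi''(u)du$ and show that its absolute value is controlled by $\|\Phi''\|\,(\zeta_{m,\eta,\gamma}^{(\alpha,s)}(z))^2$, so that it combines cleanly with $\varphi_{m,\eta,\gamma}^{(\alpha,s)}(z)\|\Phi''\|$ to give the argument $\tfrac12\sqrt{\varphi_{m,\eta,\gamma}^{(\alpha,s)}(z) + (\zeta_{m,\eta,\gamma}^{(\alpha,s)}(z))^2}$ of $\omega_2$. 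The remaining calculations are routine applications of positivity, linearity, and the moment identities already established.
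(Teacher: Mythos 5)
Your proposal is correct and follows essentially the same route as the paper's own proof: the same auxiliary operator $\widehat{\Re_{m,\eta,\gamma}^{(\alpha,s)}}$ annihilating $t-z$, the same Taylor expansion with integral remainder yielding the bound $\{\varphi_{m,\eta,\gamma}^{(\alpha,s)}(z)+(\zeta_{m,\eta,\gamma}^{(\alpha,s)}(z))^{2}\}\Vert\Phi''\Vert$, the same triangle decomposition with the $4\Vert\phi-\Phi\Vert$ and $\omega(\phi;\zeta_{m,\eta,\gamma}^{(\alpha,s)}(z))$ terms, and the same passage to $\omega_{2}$ via the $K_{2}$-functional inequality. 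No gaps.
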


\section{Graphical and Numerical Evaluations}

\label{sec7} In this section, Maple software is utilized to examine the
convergence properties, efficiency, and consistency of the proposed operators
$\Re_{m,\eta,\gamma}^{(\alpha,s)}$. The analysis is supported by graphical
illustrations and tables presenting approximation errors for specific fixed parameters.

 \begin{figure}[h]
\caption{Convergence of $\Re_{m,3,3}^{(0.9,4)}(\phi;z)$ to $\phi(z)=z   (z-\frac{4}{7})  sin(\pi z) $ for $m=20, 30,
70$}%
\label{fig1}%
\centering
\includegraphics[width=13cm, height=7cm]{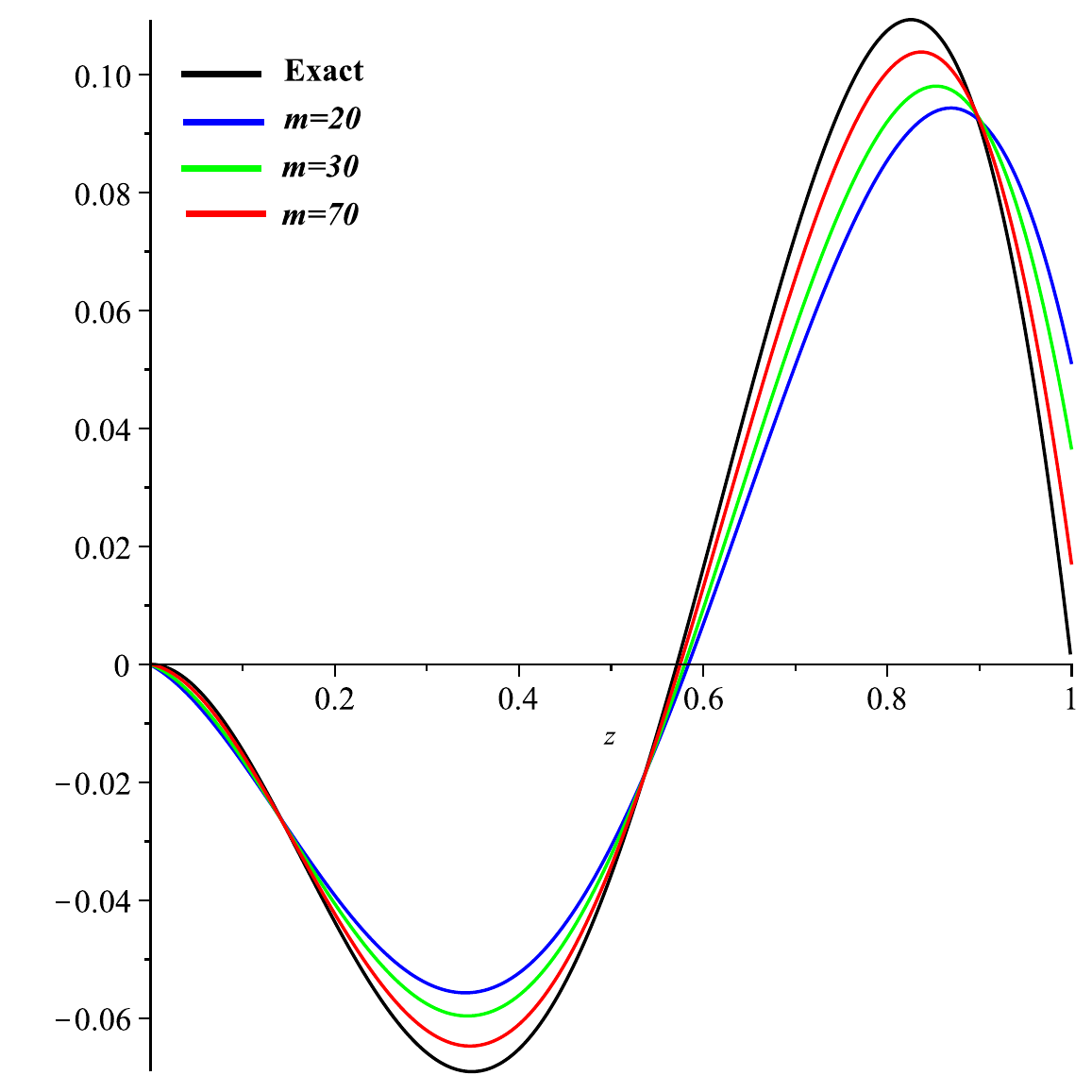}\end{figure}

\begin{table}[h]
	\caption{The absolute error of convergence of operators $\Re_{m,2,4}^{(0.9,3)}(\phi;z)$ to $\phi(z)=z   (z-\frac{4}{7})  sin(\pi z) $ for $m=40, 100, 250$}%
	\label{table1}%
	\rowcolors{0.5}{anti-flashwhite}{gray(x11gray)}
	\begin{tabular}
		[c]{c|c|c|c|c}\hline
		$z$ & $|\Re_{40,2,4}^{(0.9,3)}(\phi;z)-\phi(z)|$ & $|\Re_{100,2,4}^{(0.9,3)}(\phi;z)-\phi(z)|$ &
		$|\Re_{250,2,4}^{(0.9,3)}(\phi;z)-\phi(z)|$ & \\\hline\hline
		0.1 & 0.00123805 & 0.00053984 & 0.00022331 & \\
		0.2 & 0.00233038 & 0.00095656 & 0.00038679 & \\
		0.3 & 0.00639784 & 0.00269305 & 0.00110004 & \\
		0.4 & 0.00702828 & 0.00297029 & 0.00121524 & \\
		0.5 & 0.00255941 & 0.00105894 & 0.00042942 & \\
		0.6 & 0.00540837 & 0.00234360 & 0.00096844 & \\
		0.7 & 0.01226355 & 0.00522961 & 0.00214687 & \\
		0.8 & 0.01186536 & 0.00496537 & 0.00202220 & \\
		0.9 & 0.00107304 & 0.00062238 & 0.00028315 & \\\hline
	\end{tabular}
\end{table}

In Figure
\ref{fig1} and Table \ref{table1}, we
consider the function
\[
\phi(z)=z   (z-\frac{4}{7})  sin(\pi z)
\]
on the interval $[0,1]$. In Figure
\ref{fig1}, we demonstrate convergence of operators $\Re_{m,\eta,\gamma}^{(\alpha,s)}(\phi;z)$ to $\phi(z)=z   (z-\frac{4}{7})  sin(\pi z) $ for fixed parameters $\eta=3, \gamma=3, s=4, \alpha=0.9,$ while $m=20,30,70$ values increase  respectively. In Table \ref{table1}, we compare maximum error of approximation of operators $\Re_{m,\eta,\gamma}^{(\alpha,s)}(\phi;z)$ to $\phi(z)=z   (z-\frac{4}{7})  sin(\pi z) $ for fixed parameters $\eta=2, \gamma=4, s=3, \alpha=0.9,$ and different  values of $z\in\lbrack0,1]$ while $m=40,100,250$ values increase,  respectively. As can be seen by Figure
\ref{fig1} and Table \ref{table1} that the convergence and error of approximation of operators $\Re_{m,\eta,\gamma}^{(\alpha,s)}(\phi;z)$ turn into better in terms of some selected fixed parameters while $m$ values increase.

\begin{figure}[h]
\caption{Convergence of $\Re_{10,2,3}^{(\alpha,4)}(\phi;z)$ to $\phi(z)=(1-z)  cos(2\pi z)$
for $\alpha=0.35,0.65,0.95$}%
\label{fig2}%
\centering
\includegraphics[width=13cm, height=8cm]{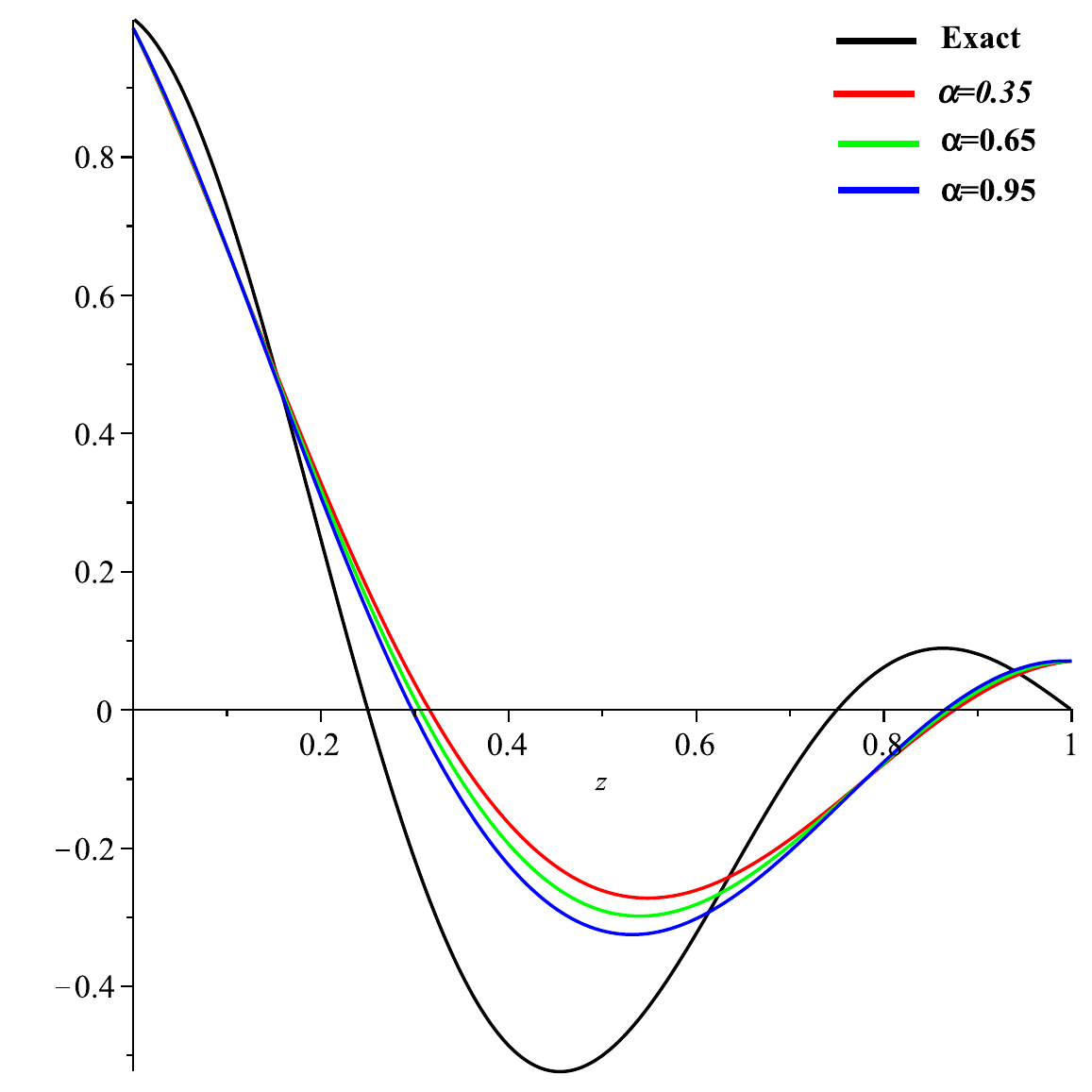}\end{figure}

\begin{table}[h]
	\caption{The absolute error of convergence of operators $\Re_{90,3,2}^{(\alpha,3)}(\phi;z)$ to $\phi(z)=(1-z)  cos(2\pi z)$
		for $\alpha=0.35,0.65,0.95$}%
	\label{table2}%
	\rowcolors{0.5}{anti-flashwhite}{gray(x11gray)}
	\begin{tabular}
		[c]{c|c|c|c|c}\hline
		$z$ & $|\Re_{90,3,2}^{(0.35,3)}(\phi;z)-\phi(z)|$ & $|\Re_{90,3,2}^{(0.65,3)}(\phi;z)-\phi(z)|$ &
		$|\Re_{90,3,2}^{(0.95,3)}(\phi;z)-\phi(z)|$ & \\\hline\hline
		0.1 & 0.01047627 & 0.01029560 & 0.01011492 & \\
		0.2 & 0.00762564 & 0.00756724 & 0.00750883 & \\
		0.3 & 0.03201223 & 0.03156654 & 0.03112084 & \\
		0.4 & 0.03975176 & 0.03910276 & 0.03845375 & \\
		0.5 & 0.02372940 & 0.02320796 & 0.02268652 & \\
		0.6 & 0.00379836 & 0.00377262 & 0.00374689 & \\
		0.7 & 0.02231546 & 0.02217607 & 0.02203669 & \\
		0.8 & 0.01974499 & 0.01951551 & 0.01928603 & \\
		0.9 & 0.00233408 & 0.00221982 & 0.00210556 & \\\hline
	\end{tabular}
\end{table}

In Figure
\ref{fig2} and Table \ref{table2}, we
examine the function
\[
\phi(z)=(1-z)  cos(2\pi z)
\]
on the interval $[0,1]$. In Figure
\ref{fig1}, we demonstrate the convergence of operators $\Re_{m,\eta,\gamma}^{(\alpha,s)}(\phi;z)$ to $\phi(z)=(1-z)  cos(2\pi z)$ for fixed parameters $\eta=2, \gamma=3, s=4, m=10,$ while $\alpha=0.35,0.65,0.95$ values increase  respectively. In Table \ref{table2}, we compare absolute error of convergence of operators $\Re_{m,\eta,\gamma}^{(\alpha,s)}(\phi;z)$ to $\phi(z)=(1-z)  cos(2\pi z) $ for fixed parameters $\eta=3, \gamma=2, s=3, m=90,$ while $\alpha=0.35,0.65,0.95$ values increase, respectively. It
can be easily seen by Figure
\ref{fig1} and Table \ref{table1} that as the $~\alpha$ values approaches 1, then the convergence of proposed operators becomes better and absolute
error decreases clearly. In other words, the operators $\Re_{m,\eta,\gamma}^{(\alpha,s)}$ offers a better
approximation result.

\begin{figure}[h]
	\caption{Convergence of $\Re_{10,3,2}^{(0.75,s)}(\phi;z)$ to $\phi(z)=22z(z-0.9) (z-0.3)$ for $s=2,5,8$ }%
	\label{fig3}%
	\centering
	\includegraphics[width=13cm, height=9cm]{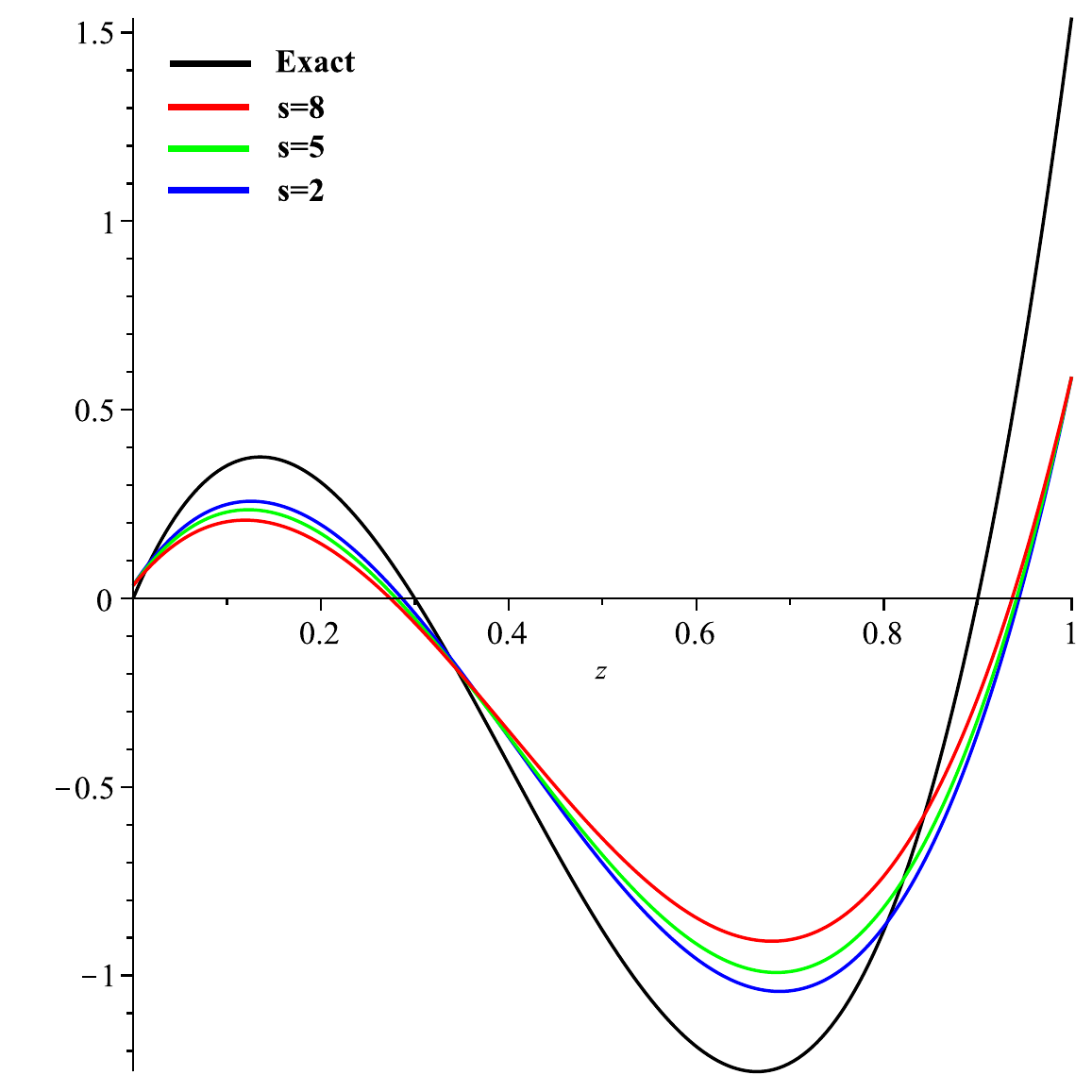}\end{figure}
\begin{table}[h]
	\caption{The absolute error of convergence of operators $\Re_{70,3,2}^{(0.75,s)}(\phi;z)$ to $\phi(z)=22z(z-0.9) (z-0.3)$ for  $s=2,5,8$ }%
	\label{table3}%
	\rowcolors{0.5}{anti-flashwhite}{gray(x11gray)}
	\begin{tabular}
		[c]{c|c|c|c|c}\hline
		$z$ & $|\Re_{70,3,2}^{(0.75,8)}(\phi;z)-\phi(z)|$ & $|\Re_{70,3,2}^{(0.75,5)}(\phi;z)-\phi(z)|$ &
		$|\Re_{70,3,2}^{(0.75,2)}(\phi;z)-\phi(z)|$ & \\\hline\hline
		0.1 & 0.02889929 & 0.02614264 & 0.02467802 & \\
		0.2 & 0.03157043 & 0.02833968 & 0.02660924 & \\
		0.3 & 0.01165659 & 0.00960805 & 0.00848310 & \\
		0.4 & 0.01898178 & 0.01881803 & 0.01879369 & \\
		0.5 & 0.04848423 & 0.04570435 & 0.04431441 & \\
		0.6 & 0.06499030 & 0.05981668 & 0.05717234 & \\
		0.7 & 0.05663953 & 0.04992080 & 0.04646074 & \\
		0.8 & 0.01157147 & 0.00478247 & 0.00127291 & \\
		0.9 & 0.08907434 & 0.08683250 & 0.08229788 & \\\hline
	\end{tabular}
\end{table}
In Figure
\ref{fig3} and Table \ref{table3}, we
consider the function
\[
\phi(z)=22z(z-0.9) (z-0.3)
\]
on the interval $[0,1]$. In Figure
\ref{fig3}, we present convergence of operators $\Re_{m,\eta,\gamma}^{(\alpha,s)}(\phi;z)$ to $\phi(z)=22z(z-0.9) (z-0.3)$ for fixed parameters $\eta=3, \gamma=2, \alpha=0.75, m=10,$ while $s=2,5,8$ values increase respectively. In Table \ref{table3}, we compare the maximum absolute error of approximation of operators $\Re_{m,\eta,\gamma}^{(\alpha,s)}(\phi;z)$ to $\phi(z)=22z(z-0.9) (z-0.3) $ for fixed parameters $\eta=3, \gamma=2, \alpha=0.75, m=70,$  while $s=2,5,8$ values increase, respectively. As shown in Figure \ref{fig3} and the data in Table \ref{table1}, the proposed operators \( \Re_{m,\eta,\gamma}^{(\alpha,s)}(\phi;z) \) demonstrate a clear improvement in approximation accuracy as the value of \( s \) decreases.

\begin{table}[h]
\caption{The comparing of error of approximation of operators $RLBK$, $BBK$, $FBK$, and $RLGBK$ to $\phi(z)=z   (z-\frac{2}{5})  (z-\frac{7}{8}) $ for $m=10,20,40,80$}%
\label{table4}%
\rowcolors{0.5}{anti-flashwhite}{gray(x11gray)}
\begin{tabular}
[c]{c|c|c|c|c|c}\hline
$m$ & $RLBK$ & $BBK$ & $FBK$ & $RLGBK$ &
\\\hline\hline
10 & 0.00871903 & 0.00888781 & 0.00921728 & 0.00854246 &\\
20 & 0.00495655 & 0.00500511 & 0.00524502 & 0.00470220 &\\
40 & 0.00264793 & 0.00266098 & 0.00280300 & 0.00247096 &\\
80 & 0.00136927 & 0.00137266 & 0.00144966 & 0.00126713 &\\
\hline
\end{tabular}
\end{table}
Moreover, in
Table \ref{table4}, we
examine the function
\[
\phi(z)=z   (z-\frac{2}{5})  (z-\frac{7}{8}) 
\]
on the interval $[0,1]$. We choose $\alpha=0.9, \eta=2, \gamma=3, s=2,$ while $m=10,20,40,80$ values increase, respectively. In order to show the advantage of proposed operators $\Re_{m,\eta,\gamma}^{(\alpha,s)}(\phi;z)$(RLGBK), we compare the maximum absolute error of approximation of operators $\Re_{m,\eta,\gamma}^{(\alpha,s)}(\phi;z)$(RLGBK) with some known operators in the literature as Berwal et al. \cite{berwalapproximation}(RLBK), Baytun{\c{c}} et al. \cite{baytunc}(BBK), Mohiuddine et al. \cite{samtaaammas}(FBK). Table \ref{table4} demonstrates that the RLGBK operators provide a significantly better approximation for $\phi(z)=z   (z-\frac{2}{5})  (z-\frac{7}{8}) $ compared to the operators such as RLBK, BBK, and FBK. 

\section{Bivariate extension of operators of $\Re_{m,\eta,\gamma}%
^{(\alpha,s)}$}

\label{sec4}

Let $I^{2}=[0,1]\times\lbrack0,1]$, $(z,y)\in I^{2}$ and $C(I^{2})$ be the
element of all continuous functions on $I^{2}$ which has the sup-norm
$\left\Vert \phi\right\Vert _{C(I^{2})}=\sup_{(z,y)\in I^{2}}\left\vert
\phi(z,y)\right\vert .$ For all $\phi\in C(I^{2}),$ $m_{1},m_{2}\in%
%TCIMACRO{\U{2115} }%
%BeginExpansion
\mathbb{N}
%EndExpansion
,(z,y)\in I^{2}$, $\eta_{1},\eta_{2}>0,$ $\gamma_{1},\gamma_{2}>0,$
$0\leq\alpha_{1,}\alpha_{2}\leq1,$ $s_{1},s_{2}\in%
%TCIMACRO{\U{2115} }%
%BeginExpansion
\mathbb{N}
%EndExpansion
\cup\left\{  0\right\}  ,$ and $q_{m_{1},j_{1}}^{(\alpha_{1},s_{1})}(z),$
$q_{m_{2},j_{2}}^{(\alpha_{2},s_{2})}(y)$ are given as in (\ref{e2})$,$ then
we get bivariate type of operators given by (\ref{e7}) as follows:
\begin{align}
\Re_{m_{1},m_{2},\eta_{1},\eta_{2},\gamma_{1},\gamma_{2}}^{(\alpha_{1}%
,\alpha_{2,},s_{1},s_{2})}(\phi;z,y)  &  =\Gamma(\eta_{1}+1)\Gamma(\eta
_{2}+1)\sum\limits_{j_{1}=0}^{m_{1}}\sum\limits_{j_{2}=0}^{m_{2}}%
q_{m_{1},j_{1}}^{(\alpha_{1},s_{1})}(z)q_{m_{2},j_{2}}^{(\alpha_{2},s_{2}%
)}(y)\nonumber\\
&  .\overset{1}{\underset{0}{%
%TCIMACRO{\dint }%
%BeginExpansion
{\displaystyle\int}
%EndExpansion
}}\overset{1}{\underset{0}{%
%TCIMACRO{\dint }%
%BeginExpansion
{\displaystyle\int}
%EndExpansion
}}\frac{(1-t_{1})^{\eta_{1}-1}}{\Gamma(\eta_{1})}\frac{(1-t_{2})^{\eta_{2}-1}%
}{\Gamma(\eta_{2})}\phi(\frac{j_{1}+t_{1}^{\gamma_{1}}}{m_{1}+1},\frac
{j_{2}+t_{2}^{\gamma_{2}}}{m_{2}+1})dt_{1}dt_{2}. \label{e15}%
\end{align}

The following Lemma follows directly from Lemma \ref{L2}.

\begin{lemma}
\label{L3} Let $e_{u,v}(z,y)=z^{u}y^{v},$ where $0\leq u+v\leq2.$ Then, for
the bivariate operators given by (\ref{e15}) we get

$\Re_{m_{1},m_{2},\eta_{1},\eta_{2},\gamma_{1},\gamma_{2}}^{(\alpha_{1}%
,\alpha_{2,},s_{1},s_{2})}(e_{0,0};z,y)=1,$

$\Re_{m_{1},m_{2},\eta_{1},\eta_{2},\gamma_{1},\gamma_{2}}^{(\alpha_{1}%
,\alpha_{2,},s_{1},s_{2})}(e_{1,0};z,y)=\frac{m_{1}}{m_{1}+1}z+\frac{1}%
{m_{1}+1}\frac{\eta_{1}!\gamma_{1}!}{(\eta_{1}+\gamma_{1})!},$

$\Re_{m_{1},m_{2},\eta_{1},\eta_{2},\gamma_{1},\gamma_{2}}^{(\alpha_{1}%
,\alpha_{2,},s_{1},s_{2})}(e_{0,1};z,y)=\frac{m_{2}}{m_{2}+1}y+\frac{1}%
{m_{2}+1}\frac{\eta_{2}!\gamma_{2}!}{(\eta_{2}+\gamma_{2})!},$

$\Re_{m_{1},m_{2},\eta_{1},\eta_{2},\gamma_{1},\gamma_{2}}^{(\alpha_{1}%
,\alpha_{2,},s_{1},s_{2})}(e_{1,1};z,y)=\left(  \frac{m_{1}}{m_{1}+1}%
z+\frac{1}{m_{1}+1}\frac{\eta_{1}!\gamma_{1}!}{(\eta_{1}+\gamma_{1})!}\right)
$.$(\frac{m_{2}}{m_{2}+1}y+\frac{1}{m_{2}+1}\frac{\eta_{2}!\gamma_{2}!}%
{(\eta_{2}+\gamma_{2})!}),$

$\Re_{m_{1},m_{2},\eta_{1},\eta_{2},\gamma_{1},\gamma_{2}}^{(\alpha_{1}%
,\alpha_{2,},s_{1},s_{2})}(e_{2,0};z,y)=\frac{m_{1}^{2}}{(m_{1}+1)^{2}}%
z^{2}+\frac{z(1-z)\left[  m_{1}+(1-\alpha_{1})s_{1}(s_{1}-1)\right]  }%
{(m_{1}+1)^{2}}$

$+\frac{2m_{1}}{(m_{1}+1)^{2}}\frac{\eta_{1}!\gamma_{1}!}{(\eta_{1}+\gamma
_{1})!}z+\frac{1}{(m_{1}+1)^{2}}\frac{\eta_{1}!(2\gamma_{1})!}{(\eta
_{1}+2\gamma_{1})!},$

$\Re_{m_{1},m_{2},\eta_{1},\eta_{2},\gamma_{1},\gamma_{2}}^{(\alpha_{1}%
,\alpha_{2,},s_{1},s_{2})}(e_{0,2};z,y)=\frac{m_{2}^{2}}{(m_{2}+1)^{2}}%
y^{2}+\frac{y(1-y)\left[  m_{2}+(1-\alpha_{2})s_{2}(s_{2}-1)\right]  }%
{(m_{2}+1)^{2}}$

$+\frac{2m_{2}}{(m_{2}+1)^{2}}\frac{\eta_{2}!\gamma_{2}!}{(\eta_{2}+\gamma
_{2})!}y+\frac{1}{(m_{2}+1)^{2}}\frac{\eta_{2}!(2\gamma_{2})!}{(\eta
_{2}+2\gamma_{2})!}.$

\begin{proof}
By the definitions of operators (\ref{e15}) and (\ref{e7}), and based on Lemma \ref{L2}, we obtain

$\Re_{m_{1},m_{2},\eta_{1},\eta_{2},\gamma_{1},\gamma_{2}}^{(\alpha_{1}%
,\alpha_{2,},s_{1},s_{2})}(e_{0,0};z,y)=\Re_{m_{1},\eta_{1},\gamma_{1}%
}^{(\alpha_{1},s_{1})}(e_{0};z)\Re_{m_{2},\eta_{2},\gamma_{2}}^{(\alpha
_{2,},s_{2})}(e_{0};y),$

$\Re_{m_{1},m_{2},\eta_{1},\eta_{2},\gamma_{1},\gamma_{2}}^{(\alpha_{1}%
,\alpha_{2,},s_{1},s_{2})}(e_{1,0};z,y)=\Re_{m_{1},\eta_{1},\gamma_{1}%
}^{(\alpha_{1},s_{1})}(e_{1};z)\Re_{m_{2},\eta_{2},\gamma_{2}}^{(\alpha
_{2,},s_{2})}(e_{0};y),$

$\Re_{m_{1},m_{2},\eta_{1},\eta_{2},\gamma_{1},\gamma_{2}}^{(\alpha_{1}%
,\alpha_{2,},s_{1},s_{2})}(e_{0,1};z,y)=\Re_{m_{1},\eta_{1},\gamma_{1}%
}^{(\alpha_{1},s_{1})}(e_{0};z)\Re_{m_{2},\eta_{2},\gamma_{2}}^{(\alpha
_{2,},s_{2})}(e_{1};y),$

$\Re_{m_{1},m_{2},\eta_{1},\eta_{2},\gamma_{1},\gamma_{2}}^{(\alpha_{1}%
,\alpha_{2,},s_{1},s_{2})}(e_{1,1};z,y)=\Re_{m_{1},\eta_{1},\gamma_{1}%
}^{(\alpha_{1},s_{1})}(e_{1};z)\Re_{m_{2},\eta_{2},\gamma_{2}}^{(\alpha
_{2,},s_{2})}(e_{1};y),$

$\Re_{m_{1},m_{2},\eta_{1},\eta_{2},\gamma_{1},\gamma_{2}}^{(\alpha_{1}%
,\alpha_{2,},s_{1},s_{2})}(e_{2,0};z,y)=\Re_{m_{1},\eta_{1},\gamma_{1}%
}^{(\alpha_{1},s_{1})}(e_{2};z)\Re_{m_{2},\eta_{2},\gamma_{2}}^{(\alpha
_{2,},s_{2})}(e_{0};y),$

$\Re_{m_{1},m_{2},\eta_{1},\eta_{2},\gamma_{1},\gamma_{2}}^{(\alpha_{1}%
,\alpha_{2,},s_{1},s_{2})}(e_{0,2};z,y)=\Re_{m_{1},\eta_{1},\gamma_{1}%
}^{(\alpha_{1},s_{1})}(e_{0};z)\Re_{m_{2},\eta_{2},\gamma_{2}}^{(\alpha
_{2,},s_{2})}(e_{2};y).$

This immediately concludes the proof of Lemma \ref{L3}.
\end{proof}
\end{lemma}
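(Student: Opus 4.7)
The plan is to exploit the fact that the bivariate operator \(\Re_{m_{1},m_{2},\eta_{1},\eta_{2},\gamma_{1},\gamma_{2}}^{(\alpha_{1},\alpha_{2,},s_{1},s_{2})}\) is a tensor product of two copies of the univariate operator \(\Re_{m,\eta,\gamma}^{(\alpha,s)}\). The key structural observation is that the kernel in \eqref{e15} factors across the variables \((z,j_1,t_1)\) and \((y,j_2,t_2)\), and so does the prefactor \(\Gamma(\eta_1+1)\Gamma(\eta_2+1)\). Consequently, whenever the test function itself factors as \(\phi(z,y)=\phi_1(z)\phi_2(y)\), the double sum and double integral separate into a product, giving
\[
\Re_{m_{1},m_{2},\eta_{1},\eta_{2},\gamma_{1},\gamma_{2}}^{(\alpha_{1},\alpha_{2,},s_{1},s_{2})}(\phi_1(z)\phi_2(y);z,y)=\Re_{m_{1},\eta_{1},\gamma_{1}}^{(\alpha_{1},s_{1})}(\phi_1;z)\,\Re_{m_{2},\eta_{2},\gamma_{2}}^{(\alpha_{2,},s_{2})}(\phi_2;y).
\]

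First I would write down this product identity explicitly by applying Fubini to the double integral in \eqref{e15} and interchanging the two independent single summations. Then I would specialize to the monomial test functions \(e_{u,v}(z,y)=z^u y^v=e_u(z)\,e_v(y)\) for the six index pairs \((u,v)\in\{(0,0),(1,0),(0,1),(1,1),(2,0),(0,2)\}\) listed in the statement. In each case the product identity above reduces the evaluation to a product of two univariate moments already computed in Lemma \ref{L2}.

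Finally, I would substitute the explicit expressions from Lemma \ref{L2} and simplify. For \((u,v)=(0,0)\), both factors are \(1\). For \((1,0)\) and \((0,1)\) exactly one factor equals \(1\) and the other is the first univariate moment, yielding \(\tfrac{m_1}{m_1+1}z+\tfrac{1}{m_1+1}\tfrac{\eta_1!\gamma_1!}{(\eta_1+\gamma_1)!}\) and its \(y\)-analogue. For \((1,1)\) the two first moments are multiplied together, which reproduces the stated product. For \((2,0)\) and \((0,2)\) one factor is the second univariate moment and the other is \(1\), giving the full quadratic expression directly from Lemma \ref{L2}.

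There is no real obstacle here: the entire argument is driven by the tensor-product structure of the kernel, and everything else is bookkeeping. The only point to watch is ensuring the separation of the double integral is rigorously justified, which is immediate because both integrands are non-negative (or equivalently, absolutely integrable on \([0,1]^2\)) and continuous in \((t_1,t_2)\), so Fubini applies without issue.
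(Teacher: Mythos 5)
Your proposal is correct and follows exactly the route of the paper's own proof: both exploit the tensor-product factorization of the kernel in (\ref{e15}) to write $\Re_{m_{1},m_{2},\eta_{1},\eta_{2},\gamma_{1},\gamma_{2}}^{(\alpha_{1},\alpha_{2,},s_{1},s_{2})}(e_{u,v};z,y)$ as the product of the two univariate operators applied to $e_{u}$ and $e_{v}$, and then substitute the moments from Lemma \ref{L2}. The only difference is that you make the Fubini/separation step explicit, which the paper leaves implicit.
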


\begin{theorem}
Let the operators \( \Re_{m_{1},m_{2},\eta_{1},\eta_{2},\gamma_{1},\gamma_{2}}^{(\alpha_{1},\alpha_{2},s_{1},s_{2})}(\phi;z,y) \) be defined by (\ref{e15}). Then, for all \( \phi \in C(I^2) \), the operators (\ref{e15}) converge uniformly to \( \phi \) on \( I^2 \).
\end{theorem}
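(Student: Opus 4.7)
The plan is to apply the bivariate Korovkin theorem due to Volkov, which reduces uniform convergence on $C(I^{2})$ of a sequence of positive linear operators to uniform convergence on the four test functions $e_{0,0}$, $e_{1,0}$, $e_{0,1}$, and $e_{2,0}+e_{0,2}$. Since the operator $\Re_{m_{1},m_{2},\eta_{1},\eta_{2},\gamma_{1},\gamma_{2}}^{(\alpha_{1},\alpha_{2},s_{1},s_{2})}$ is a tensor-product construction of the positive linear operators $\Re_{m,\eta,\gamma}^{(\alpha,s)}$ from (\ref{e7}), positivity and linearity are inherited, so the structural hypotheses of Volkov's theorem are automatically met.

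First, I would read off from Lemma \ref{L3} the closed-form expressions for $\Re(e_{u,v};z,y)$ with $u+v\leq 2$ and compare each with its target. The identity $\Re(e_{0,0};z,y)=1$ holds everywhere, so that test function is dealt with immediately. For $e_{1,0}$ and $e_{0,1}$, subtracting the target gives a difference of the form
\[
\Re(e_{1,0};z,y)-z=-\frac{z}{m_{1}+1}+\frac{1}{m_{1}+1}\frac{\eta_{1}!\gamma_{1}!}{(\eta_{1}+\gamma_{1})!},
\]
and symmetrically for $e_{0,1}$. Since $(z,y)\in I^{2}$ and the factorial ratio is a fixed constant, the supremum norm of each such difference is $O\!\left(1/(m_{i}+1)\right)$ and therefore tends to zero as $m_{i}\to\infty$.

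Next, for the test function $e_{2,0}+e_{0,2}$, I would add the two quadratic moments from Lemma \ref{L3} and estimate
\[
\left|\Re(e_{2,0}+e_{0,2};z,y)-(z^{2}+y^{2})\right|
\]
term by term on $I^{2}$. Each correction piece has a numerator bounded on $I^{2}$ (for instance $z(1-z)\leq 1/4$ and the factorial constants are fixed) and a denominator of order $(m_{i}+1)$ or $(m_{i}+1)^{2}$, so the whole expression is $O\!\left(1/m_{1}+1/m_{2}\right)$ uniformly on $I^{2}$.

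Having verified the four Korovkin--Volkov conditions uniformly on $I^{2}$, the conclusion
\[
\lim_{\min(m_{1},m_{2})\to\infty}\left\Vert \Re_{m_{1},m_{2},\eta_{1},\eta_{2},\gamma_{1},\gamma_{2}}^{(\alpha_{1},\alpha_{2},s_{1},s_{2})}(\phi;\cdot,\cdot)-\phi\right\Vert _{C(I^{2})}=0\quad\text{for all }\phi\in C(I^{2})
\]
follows immediately. There is no genuine obstacle to overcome; once Lemma \ref{L3} is in hand, the entire argument is the Volkov reduction plus direct supremum bounds on explicit rational expressions in $1/(m_{i}+1)$. The only small care needed is to note that the shape-parameter correction $(1-\alpha_{i})s_{i}(s_{i}-1)$, although $s_{i}$-dependent, is a fixed constant as $m_{1},m_{2}\to\infty$, and hence does not interfere with the $O(1/m_{i})$ decay.
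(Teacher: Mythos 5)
Your proposal is correct and follows essentially the same route as the paper: both reduce the claim to Volkov's bivariate Korovkin theorem and verify the test functions $e_{0,0}$, $e_{1,0}$, $e_{0,1}$, and $e_{2,0}+e_{0,2}$ using the explicit moments of Lemma \ref{L3}. Your version merely spells out the $O(1/(m_i+1))$ supremum estimates that the paper leaves implicit.
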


\begin{proof}
It suffices to demonstrate that the following relation
\[
\underset{m_{1},m_{2}\rightarrow\infty}{\lim}\Re_{m_{1},m_{2},\eta_{1}%
,\eta_{2},\gamma_{1},\gamma_{2}}^{(\alpha_{1},\alpha_{2,},s_{1},s_{2}%
)}(e_{u,v};z,y)=e_{u,v},
\]
 uniformly on $I^{2}.$ Then, by Lemma \ref{L3} we may write
\[
\underset{m_{1},m_{2}\rightarrow\infty}{\lim}\Re_{m_{1},m_{2},\eta_{1}%
,\eta_{2},\gamma_{1},\gamma_{2}}^{(\alpha_{1},\alpha_{2,},s_{1},s_{2}%
)}(e_{0,0};z,y)=e_{0,0},
\]%
\[
\underset{m_{1},m_{2}\rightarrow\infty}{\lim}\Re_{m_{1},m_{2},\eta_{1}%
,\eta_{2},\gamma_{1},\gamma_{2}}^{(\alpha_{1},\alpha_{2,},s_{1},s_{2}%
)}(e_{1,0};z,y)=e_{1,0},\underset{m_{1},m_{2}\rightarrow\infty}{\lim}%
\Re_{m_{1},m_{2},\eta_{1},\eta_{2},\gamma_{1},\gamma_{2}}^{(\alpha_{1}%
,\alpha_{2,},s_{1},s_{2})}(e_{0,1};z,y)=e_{0,1},
\]
and
\[
\underset{m_{1},m_{2}\rightarrow\infty}{\lim}\Re_{m_{1},m_{2},\eta_{1}%
,\eta_{2},\gamma_{1},\gamma_{2}}^{(\alpha_{1},\alpha_{2,},s_{1},s_{2}%
)}(e_{2,0}+e_{0,2};z,y)=e_{2,0}+e_{0,2}.
\]
According to the Korovkin theorems presented by Volkov \cite{Vol}, we have thus
\[
\underset{m_{1},m_{2}\rightarrow\infty}{\lim}\Re_{m_{1},m_{2},\eta_{1}%
,\eta_{2},\gamma_{1},\gamma_{2}}^{(\alpha_{1},\alpha_{2,},s_{1},s_{2})}%
(\phi;z,y)=\phi,
\]
converges uniformly.
\end{proof}

Additionally, we will determine the degree of convergence of the operators (\ref{e15}) by using complete and partial modulus of continuity.

For $\phi(z,y)\in C(I^{2})$,the complete modulus of continuity given by
\[
\omega(\phi,\rho)=\sup\left\{  \left\vert \phi(v_{1},v_{2})-\phi
(z,y)\right\vert :\sqrt{(v_{1}-z)^{2}+(v_{2}-y)^{2}}\leq\rho\right\}  ,
\]
for all $(v_{1},v_{2}),(z,y)\in I^{2}.$ Also, the partial modulus continuity
of $\phi(z,y)$ defined by
\begin{align*}
\omega_{1}(\phi,\rho_{1})  &  =\sup\left\{  \left\vert \phi(u_{1}%
,z)-\phi(u_{2},z)\right\vert :z\in\lbrack0,1]\text{ and }\left\vert
u_{1}-u_{2}\right\vert \leq\rho_{1}\right\}  ,\\
\omega_{2}(\phi,\rho_{2})  &  =\sup\left\{  \left\vert \phi(y,v_{1}%
)-\phi(y,v_{2})\right\vert :y\in\lbrack0,1]\text{ and }\left\vert v_{1}%
-v_{2}\right\vert \leq\rho_{2}\right\}  .
\end{align*}

\begin{theorem}
\label{t1} For the operators $\Re_{m_{1},m_{2},\eta_{1},\eta_{2},\gamma
_{1},\gamma_{2}}^{(\alpha_{1},\alpha_{2,},s_{1},s_{2})}(\phi;z,y)$ the
following inequality
\[
\left\vert \Re_{m_{1},m_{2},\eta_{1},\eta_{2},\gamma_{1},\gamma_{2}}%
^{(\alpha_{1},\alpha_{2,},s_{1},s_{2})}(\phi;z,y)-\phi(z,y)\right\vert
\leq4\omega(\phi,\sqrt{\varphi_{m_{1},\eta_{1},\gamma_{1}}^{(\alpha_{1}%
,s_{1})}(z)},\sqrt{\varphi_{m_{2},\eta_{2},\gamma_{2}}^{(\alpha_{2},s_{2}%
)}(y)}),
\]
holds, where $\varphi_{m_{1},\eta_{1},\gamma_{1}}^{(\alpha_{1},s_{1})}(z)$ and
$\varphi_{m_{2},\eta_{2},\gamma_{2}}^{(\alpha_{2},s_{2})}(y)$ are same as in
Corollary \ref{C1}
\end{theorem}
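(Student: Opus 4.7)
The plan is to carry over the Cauchy–Schwarz based proof of Theorem \ref{t2} to the two-dimensional setting by exploiting the tensor-product structure of the operator (\ref{e15}). The starting point is the two-argument form of the complete modulus inequality
\[
|\phi(t_{1},t_{2})-\phi(z,y)| \leq \left(1+\frac{|t_{1}-z|}{\delta_{1}}\right)\!\left(1+\frac{|t_{2}-y|}{\delta_{2}}\right)\omega(\phi;\delta_{1},\delta_{2}),
\]
which is proved by splitting the displacement as $\phi(t_{1},t_{2})-\phi(z,t_{2}) + \phi(z,t_{2})-\phi(z,y)$ and applying the one-variable estimate $|\phi(u+\vartheta)-\phi(u)|\leq(1+\vartheta/\delta)\omega(\phi;\delta)$ in each slice.

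The second step is to apply the positive linear operator $\Re_{m_{1},m_{2},\eta_{1},\eta_{2},\gamma_{1},\gamma_{2}}^{(\alpha_{1},\alpha_{2},s_{1},s_{2})}(\cdot;z,y)$ to the pointwise bound. Because the kernel of (\ref{e15}) is the product of two one-variable kernels, Fubini's theorem yields the factorization
\[
\Re_{m_{1},m_{2},\eta_{1},\eta_{2},\gamma_{1},\gamma_{2}}^{(\alpha_{1},\alpha_{2},s_{1},s_{2})}\!\bigl(\phi_{1}(t_{1})\phi_{2}(t_{2});z,y\bigr) = \Re_{m_{1},\eta_{1},\gamma_{1}}^{(\alpha_{1},s_{1})}(\phi_{1};z)\,\Re_{m_{2},\eta_{2},\gamma_{2}}^{(\alpha_{2},s_{2})}(\phi_{2};y),
\]
so the image of the separable majorant $(1+|t_{1}-z|/\delta_{1})(1+|t_{2}-y|/\delta_{2})$ splits into a product of two univariate quantities. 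To each of those factors I would then apply the Cauchy–Schwarz inequality exactly as in the proof of Theorem \ref{t2}, obtaining $\Re_{m_{i},\eta_{i},\gamma_{i}}^{(\alpha_{i},s_{i})}(|t-\cdot|;\cdot)\leq\sqrt{\varphi_{m_{i},\eta_{i},\gamma_{i}}^{(\alpha_{i},s_{i})}(\cdot)}$ for $i=1,2$ by virtue of Corollary \ref{C1} (using also $\Re_{m_{i},\eta_{i},\gamma_{i}}^{(\alpha_{i},s_{i})}(e_{0};\cdot)=1$).

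Assembling these estimates produces
\[
\bigl|\Re_{m_{1},m_{2},\eta_{1},\eta_{2},\gamma_{1},\gamma_{2}}^{(\alpha_{1},\alpha_{2},s_{1},s_{2})}(\phi;z,y)-\phi(z,y)\bigr| \leq \left(1+\frac{\sqrt{\varphi_{m_{1},\eta_{1},\gamma_{1}}^{(\alpha_{1},s_{1})}(z)}}{\delta_{1}}\right)\!\left(1+\frac{\sqrt{\varphi_{m_{2},\eta_{2},\gamma_{2}}^{(\alpha_{2},s_{2})}(y)}}{\delta_{2}}\right)\omega(\phi;\delta_{1},\delta_{2}),
\]
and the optimal choice $\delta_{1}=\sqrt{\varphi_{m_{1},\eta_{1},\gamma_{1}}^{(\alpha_{1},s_{1})}(z)}$, $\delta_{2}=\sqrt{\varphi_{m_{2},\eta_{2},\gamma_{2}}^{(\alpha_{2},s_{2})}(y)}$ turns each bracket into $2$, producing the asserted constant $4$.

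I do not expect a genuine obstacle here: the entire argument is a direct bivariate analogue of Theorem \ref{t2}, and the only conceptual point requiring care is the kernel factorization used to separate variables. Once that observation is written down, the Cauchy–Schwarz step and the optimization over $\delta_{1},\delta_{2}$ are mechanical applications of Corollary \ref{C1}.
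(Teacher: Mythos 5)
Your proposal is correct and takes essentially the same route as the paper: both bound $|\phi(t_{1},t_{2})-\phi(z,y)|$ by the complete modulus times the majorant $\bigl(1+|t_{1}-z|/\delta_{1}\bigr)\bigl(1+|t_{2}-y|/\delta_{2}\bigr)$, apply the (tensor-product) operator, invoke Cauchy--Schwarz together with Corollary \ref{C1}, and choose $\delta_{1}=\sqrt{\varphi_{m_{1},\eta_{1},\gamma_{1}}^{(\alpha_{1},s_{1})}(z)}$, $\delta_{2}=\sqrt{\varphi_{m_{2},\eta_{2},\gamma_{2}}^{(\alpha_{2},s_{2})}(y)}$ to obtain $2\cdot 2=4$ --- the paper merely carries the product in its expanded form $1+\lambda_{1}+\lambda_{2}+\lambda_{1}\lambda_{2}$ after Cauchy--Schwarz. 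One minor remark: the slicing argument you sketch ($\phi(t_{1},t_{2})-\phi(z,t_{2})+\phi(z,t_{2})-\phi(z,y)$) naturally yields the \emph{additive} bound $\bigl[(1+|t_{1}-z|/\delta_{1})+(1+|t_{2}-y|/\delta_{2})\bigr]\omega$ rather than the product; the product form instead follows from subadditivity of the complete modulus in each argument separately, but either version delivers the same constant $4$ after your choice of $\delta_{1},\delta_{2}$, so nothing in the conclusion is affected.
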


\begin{proof}
Using the linearity property of the operators (\ref{e15}) and the definition of the complete modulus of continuity, we can express it as follows:
\begin{align*}
&  \left\vert \Re_{m_{1},m_{2},\eta_{1},\eta_{2},\gamma_{1},\gamma_{2}%
}^{(\alpha_{1},\alpha_{2,},s_{1},s_{2})}(\phi;z,y)-\phi(z,y)\right\vert \\
&  =\left\vert \Re_{m_{1},m_{2},\eta_{1},\eta_{2},\gamma_{1},\gamma_{2}%
}^{(\alpha_{1},\alpha_{2,},s_{1},s_{2})}(\phi(s,t);z,y)-R_{m,n}^{\alpha
_{1},\alpha_{2,}\beta_{1},\beta_{2,}\gamma_{1},\gamma_{2}}(\phi
(y,z);z,y)\right\vert \\
&  \leq\Re_{m_{1},m_{2},\eta_{1},\eta_{2},\gamma_{1},\gamma_{2}}^{(\alpha
_{1},\alpha_{2,},s_{1},s_{2})}(\left\vert \phi(s,t)-\phi(z,y)\right\vert
;z,y)\\
&  \leq\Re_{m_{1},m_{2},\eta_{1},\eta_{2},\gamma_{1},\gamma_{2}}^{(\alpha
_{1},\alpha_{2,},s_{1},s_{2})}(\omega(\sqrt{(t-z)^{2}+(s-y)^{2}};z,y))\\
&  \leq\omega(\phi,\sqrt{\varphi_{m_{1},\eta_{1},\gamma_{1}}^{(\alpha
_{1},s_{1})}(z)},\sqrt{\varphi_{m_{2},\eta_{2},\gamma_{2}}^{(\alpha_{2}%
,s_{2})}(y)})\\
&  \times\left[  1+\frac{1}{\sqrt{\varphi_{m_{1},\eta_{1},\gamma_{1}}%
^{(\alpha_{1},s_{1})}(z)\varphi_{m_{2},\eta_{2},\gamma_{2}}^{(\alpha_{2}%
,s_{2})}(y)}}\Re_{m_{1},m_{2},\eta_{1},\eta_{2},\gamma_{1},\gamma_{2}%
}^{(\alpha_{1},\alpha_{2,},s_{1},s_{2})}(\sqrt{(t-z)^{2}+(s-y)^{2}%
};z,y)\right]  .
\end{align*}
Implementing the Cauchy-Schwarz inequality, then
\begin{align*}
&  \left\vert \Re_{m_{1},m_{2},\eta_{1},\eta_{2},\gamma_{1},\gamma_{2}%
}^{(\alpha_{1},\alpha_{2,},s_{1},s_{2})}(\phi;z,y)-\phi(z,y)\right\vert \\
&  \leq\omega(\phi,\sqrt{\varphi_{m_{1},\eta_{1},\gamma_{1}}^{(\alpha
_{1},s_{1})}(z)},\sqrt{\varphi_{m_{2},\eta_{2},\gamma_{2}}^{(\alpha_{2}%
,s_{2})}(y)})\\
&  \times\left[  1+\frac{\sqrt{\Re_{m_{1},m_{2},\eta_{1},\eta_{2},\gamma
_{1},\gamma_{2}}^{(\alpha_{1},\alpha_{2,},s_{1},s_{2})}((t-z)^{2}%
;z,y)\Re_{m_{1},m_{2},\eta_{1},\eta_{2},\gamma_{1},\gamma_{2}}^{(\alpha
_{1},\alpha_{2,},s_{1},s_{2})}((s-y)^{2};z,y)}}{\sqrt{\varphi_{m_{1},\eta
_{1},\gamma_{1}}^{(\alpha_{1},s_{1})}(z)\varphi_{m_{2},\eta_{2},\gamma_{2}%
}^{(\alpha_{2},s_{2})}(y)}}\right. \\
&  +\left.  \frac{\sqrt{\Re_{m_{1},m_{2},\eta_{1},\eta_{2},\gamma_{1}%
,\gamma_{2}}^{(\alpha_{1},\alpha_{2,},s_{1},s_{2})}((t-z)^{2};z,y)}}%
{\sqrt{\varphi_{m_{1},\eta_{1},\gamma_{1}}^{(\alpha_{1},s_{1})}(z)}}%
+\frac{\sqrt{\Re_{m_{1},m_{2},\eta_{1},\eta_{2},\gamma_{1},\gamma_{2}%
}^{(\alpha_{1},\alpha_{2,},s_{1},s_{2})}((s-y)^{2};z,y)}}{\sqrt{\varphi
_{m_{2},\eta_{2},\gamma_{2}}^{(\alpha_{2},s_{2})}(y)}}\right]  .
\end{align*}
Taking $\varphi_{m_{1},\eta_{1},\gamma_{1}}^{(\alpha_{1},s_{1})}(z)=\Re
_{m_{1},m_{2},\eta_{1},\eta_{2},\gamma_{1},\gamma_{2}}^{(\alpha_{1}%
,\alpha_{2,},s_{1},s_{2})}((t-z)^{2};z,y)$ and $\varphi_{m_{2},\eta_{2}%
,\gamma_{2}}^{(\alpha_{2},s_{2})}(y)=\Re_{m_{1},m_{2},\eta_{1},\eta_{2}%
,\gamma_{1},\gamma_{2}}^{(\alpha_{1},\alpha_{2,},s_{1},s_{2})}((s-y)^{2}%
;z,y),$ this gives the assertion.
\end{proof}

\begin{theorem}
For the operators $\Re_{m_{1},m_{2},\eta_{1},\eta_{2},\gamma_{1},\gamma_{2}%
}^{(\alpha_{1},\alpha_{2,},s_{1},s_{2})}(\phi;y,z)$ the following inequality
\[
\left\vert \Re_{m_{1},m_{2},\eta_{1},\eta_{2},\gamma_{1},\gamma_{2}}%
^{(\alpha_{1},\alpha_{2,},s_{1},s_{2})}(\phi;z,y)-\phi(z,y)\right\vert
\leq2\left(  \omega_{1}(\phi,\sqrt{\varphi_{m_{1},\eta_{1},\gamma_{1}%
}^{(\alpha_{1},s_{1})}(z)})+\omega_{2}(\phi,\sqrt{\varphi_{m_{2},\eta
_{2},\gamma_{2}}^{(\alpha_{2},s_{2})}(y)})\right)  ,
\]
holds, where $\varphi_{m_{1},\eta_{1},\gamma_{1}}^{(\alpha_{1},s_{1})}(z)$ and
$\varphi_{m_{2},\eta_{2},\gamma_{2}}^{(\alpha_{2},s_{2})}(y)$ are defined in previous
Theorem.
\end{theorem}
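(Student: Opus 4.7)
The plan is to decompose the error using the triangle inequality with an intermediate point:
\[
\phi(s,t)-\phi(z,y) = \bigl[\phi(s,t)-\phi(z,t)\bigr] + \bigl[\phi(z,t)-\phi(z,y)\bigr],
\]
so that each bracket can be estimated by one of the partial moduli. For the first bracket I would apply the standard inequality
\[
|\phi(s,t)-\phi(z,t)| \leq \Bigl(1+\tfrac{|s-z|}{\rho_{1}}\Bigr)\omega_{1}(\phi,\rho_{1}),
\]
and analogously $|\phi(z,t)-\phi(z,y)| \leq (1+|t-y|/\rho_{2})\omega_{2}(\phi,\rho_{2})$.

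Next I would feed these into the bivariate operator. Using the linearity and positivity of $\Re_{m_{1},m_{2},\eta_{1},\eta_{2},\gamma_{1},\gamma_{2}}^{(\alpha_{1},\alpha_{2},s_{1},s_{2})}$, together with $\Re(e_{0,0};z,y)=1$ from Lemma \ref{L3}, I obtain
\[
\bigl|\Re(\phi;z,y)-\phi(z,y)\bigr| \leq \omega_{1}(\phi,\rho_{1})\Bigl(1+\tfrac{1}{\rho_{1}}\Re(|s-z|;z,y)\Bigr)+\omega_{2}(\phi,\rho_{2})\Bigl(1+\tfrac{1}{\rho_{2}}\Re(|t-y|;z,y)\Bigr).
\]
The Cauchy--Schwarz inequality then bounds $\Re(|s-z|;z,y)$ by $\sqrt{\Re((s-z)^{2};z,y)}$, and by the tensor-product structure recorded in Lemma \ref{L3} one has
\[
\Re((s-z)^{2};z,y) = \varphi_{m_{1},\eta_{1},\gamma_{1}}^{(\alpha_{1},s_{1})}(z), \qquad \Re((t-y)^{2};z,y) = \varphi_{m_{2},\eta_{2},\gamma_{2}}^{(\alpha_{2},s_{2})}(y),
\]
which are exactly the univariate central second moments from Corollary \ref{C1}.

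To close the argument I would choose $\rho_{1}=\sqrt{\varphi_{m_{1},\eta_{1},\gamma_{1}}^{(\alpha_{1},s_{1})}(z)}$ and $\rho_{2}=\sqrt{\varphi_{m_{2},\eta_{2},\gamma_{2}}^{(\alpha_{2},s_{2})}(y)}$, which turns each parenthesis into the factor $2$ and yields the claimed inequality. There is no real obstacle here: the proof mirrors the proof of Theorem \ref{t2} in the univariate case, with the tensor-product decomposition of the bivariate operator (already exploited in Lemma \ref{L3}) taking care of the fact that the partial moduli involve only one variable at a time. The only subtlety worth keeping in mind is the correct ordering when splitting $\phi(s,t)-\phi(z,y)$, so that each difference is controlled by the appropriate $\omega_{i}$; otherwise all steps reduce to linearity, positivity, Cauchy--Schwarz, and the moment estimates already at our disposal.
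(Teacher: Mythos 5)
Your proposal is correct and follows essentially the same route as the paper: the split $\phi(s,t)-\phi(z,y)=[\phi(s,t)-\phi(z,t)]+[\phi(z,t)-\phi(z,y)]$, the standard partial-modulus inequality, positivity and $\Re(e_{0,0};z,y)=1$, Cauchy--Schwarz on $\Re(|s-z|;z,y)$, the tensor-product identification of the bivariate second central moments with the univariate ones from Corollary \ref{C1}, and the choice $\rho_{i}=\sqrt{\varphi(\cdot)}$ to produce the factor $2$. If anything, your write-up is slightly cleaner than the paper's, which conflates $\omega_{i}(\phi,\varphi)$ with $\omega_{i}(\phi,\sqrt{\varphi})$ in the intermediate displays before arriving at the same conclusion.
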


\begin{proof}
Based on the definition of the partial modulus of continuity and applying the Cauchy-Schwarz inequality, we can express it as follows:
\begin{align*}
&  \left\vert \Re_{m_{1},m_{2},\eta_{1},\eta_{2},\gamma_{1},\gamma_{2}%
}^{(\alpha_{1},\alpha_{2,},s_{1},s_{2})}(\phi;z,y)-\phi(z,y)\right\vert \\
&  =\left\vert \Re_{m_{1},m_{2},\eta_{1},\eta_{2},\gamma_{1},\gamma_{2}%
}^{(\alpha_{1},\alpha_{2,},s_{1},s_{2})}(\phi(s,t);z,y)-\Re_{m_{1},m_{2}%
,\eta_{1},\eta_{2},\gamma_{1},\gamma_{2}}^{(\alpha_{1},\alpha_{2,},s_{1}%
,s_{2})}(\phi(z,y);z,y)\right\vert \\
&  \leq\Re_{m_{1},m_{2},\eta_{1},\eta_{2},\gamma_{1},\gamma_{2}}^{(\alpha
_{1},\alpha_{2,},s_{1},s_{2})}(\left\vert \phi(s,t)-\phi(z,y)\right\vert
;z,y)\\
&  \leq\Re_{m_{1},m_{2},\eta_{1},\eta_{2},\gamma_{1},\gamma_{2}}^{(\alpha
_{1},\alpha_{2,},s_{1},s_{2})}(\left\vert \phi(s,t)-\phi(z,t)\right\vert
;z,y)+\Re_{m_{1},m_{2},\eta_{1},\eta_{2},\gamma_{1},\gamma_{2}}^{(\alpha
_{1},\alpha_{2,},s_{1},s_{2})}(\left\vert \phi(z,t)-\phi(z,y)\right\vert
;z,y)\\
&  \leq\omega_{1}(\phi,\varphi_{m_{1},\eta_{1},\gamma_{1}}^{(\alpha_{1}%
,s_{1})}(z))\left[  1+\frac{1}{\varphi_{m_{1},\eta_{1},\gamma_{1}}%
^{(\alpha_{1},s_{1})}(z)}\Re_{m_{1},m_{2},\eta_{1},\eta_{2},\gamma_{1}%
,\gamma_{2}}^{(\alpha_{1},\alpha_{2,},s_{1},s_{2})}(\left\vert s-z\right\vert
;z,y)\right]  \\
&  +\omega_{2}(\phi,\varphi_{m_{2},\eta_{2},\gamma_{2}}^{(\alpha_{2},s_{2}%
)}(y))\left[  1+\frac{1}{\varphi_{m_{2},\eta_{2},\gamma_{2}}^{(\alpha
_{2},s_{2})}(y)}\Re_{m_{1},m_{2},\eta_{1},\eta_{2},\gamma_{1},\gamma_{2}%
}^{(\alpha_{1},\alpha_{2,},s_{1},s_{2})}(\left\vert t-y\right\vert
;z,y)\right]  .
\end{align*}
Consequently, we have
\begin{align*}
&  \left\vert \Re_{m_{1},m_{2},\eta_{1},\eta_{2},\gamma_{1},\gamma_{2}%
}^{(\alpha_{1},\alpha_{2,},s_{1},s_{2})}(\phi;z,y)-\phi(z,y)\right\vert
\leq\omega_{1}(\phi,\varphi_{m_{1},\eta_{1},\gamma_{1}}^{(\alpha_{1},s_{1}%
)}(z))\left[  1+\frac{\sqrt{\Re_{m_{1},m_{2},\eta_{1},\eta_{2},\gamma
_{1},\gamma_{2}}^{(\alpha_{1},\alpha_{2,},s_{1},s_{2})}((t-z)^{2};z,y)}%
}{\varphi_{m_{1},\eta_{1},\gamma_{1}}^{(\alpha_{1},s_{1})}(z)}\right]  \\
&  +\omega_{2}(\phi,\varphi_{m_{2},\eta_{2},\gamma_{2}}^{(\alpha_{2},s_{2}%
)}(y))\left[  1+\frac{\sqrt{\Re_{m_{1},m_{2},\eta_{1},\eta_{2},\gamma
_{1},\gamma_{2}}^{(\alpha_{1},\alpha_{2,},s_{1},s_{2})}((s-y)^{2};z,y)}%
}{\varphi_{m_{2},\eta_{2},\gamma_{2}}^{(\alpha_{2},s_{2})}(y)}\right]  .
\end{align*}
Hence, choosing $\varphi_{m_{1},\eta_{1},\gamma_{1}}^{(\alpha_{1},s_{1})}(z)$
and $\varphi_{m_{2},\eta_{2},\gamma_{2}}^{(\alpha_{2},s_{2})}(y)$ as in
Theorem \ref{t1}, we get the desired assertion.
\end{proof}
Further, we will provide various graphical representations and numerical error of approximation tables of the operators $\Re_{m_{1},m_{2},\eta_{1},\eta_{2},\gamma
	_{1},\gamma_{2}}^{(\alpha_{1},\alpha_{2,},s_{1},s_{2})}(\phi;z,y)$.
	
	\begin{figure}[h]
		\centering
		\includegraphics[width=11cm, height=9cm]{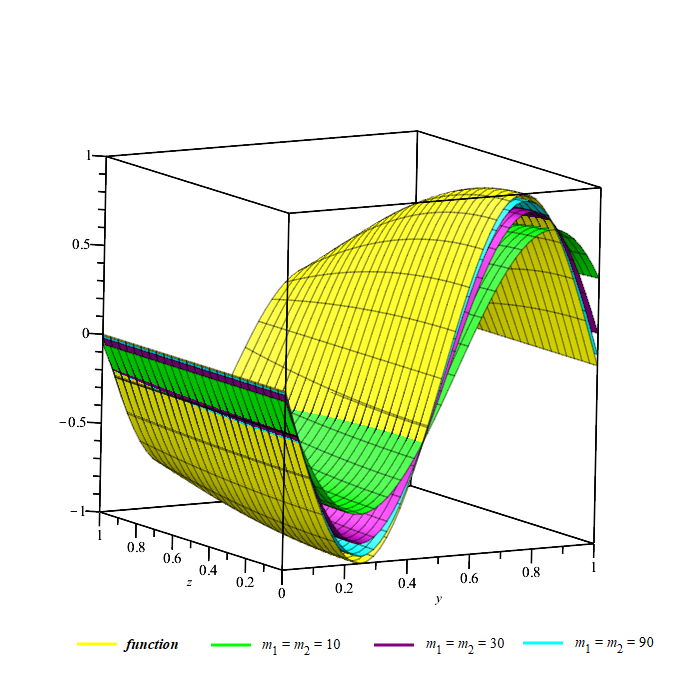} \caption{Convergence of operators
			$\Re_{m_{1},m_{2},2,2,3,3}^{(0.9,0.9,2,2)}(\phi;z,y)$ to
			$\phi(;z,y)=(yz^2-1)\sin(2\pi y)$ for $m_{1}=m_{2}=10,30,90$ }%
		\label{fig4}%
	\end{figure}
\begin{table}[h]
	\caption{The error of approximation of operators $\Re_{m_{1},m_{2},2,2,3,3}^{(0.9,0.9,2,2)}(\phi;z,y)$ to
		$\phi(;z,y)=(yz^2-1)\sin(2\pi y)$ for $m_{1}=m_{2}=10,30,90$}%
	\label{tab5}
	\rowcolors{0.5}{anti-flashwhite}{gray(x11gray)}
	\begin{tabular}
		[c]{c|c|c|c|c}\hline
		$(z,y)$&$|\Re_{10,10,2,2,3,3%
		}^{(0.9,0.9,2,2)}(\phi;z,y)-\phi(z,y)|$& $|\Re_{30,30,2,2,3,3%
		}^{(0.9,0.9,2,2)}(\phi;z,y)-\phi(z,y)|$& $|\Re_{90,90,2,2,3,3%
		}^{(0.9,0.9,2,2)}(\phi;z,y)-\phi(z,y)|$& \\\hline
		(0.1,0.1)& 0.100371117& 0.034798399& 0.011680216& \\
		(0.2,0.2)& 0.248811999& 0.097459204& 0.034498155& \\
		(0.3,0.3)& 0.251918792& 0.101864734& 0.036610920& \\
		(0.4,0.4)& 0.084553405& 0.030420691& 0.010448410& \\
		(0.5,0.5)& 0.152540436& 0.071422784& 0.026978653& \\
		(0.6,0.6)& 0.297414896& 0.127200247& 0.046443579& \\
		(0.7,0.7)& 0.241080501& 0.090851633& 0.031299415& \\
		(0.8,0.8)& 0.022308261& 0.006763693& 0.004701082& \\
		(0.9,0.9)& 0.169363792& 0.069756801& 0.024705205& \\\hline
	\end{tabular}
\end{table}
	In Figure
\ref{fig4} and Table \ref{tab5}, we
examine the function
\[
\phi(;z,y)=(yz^2-1)\sin(2\pi y)
\]
on the interval $I^{2}$. In Figure
\ref{fig4}, we present the convergence of operators $\Re_{m_{1},m_{2},\eta_{1},\eta_{2},\gamma
	_{1},\gamma_{2}}^{(\alpha_{1},\alpha_{2,},s_{1},s_{2})}(\phi;z,y)$ to $\phi(;z,y)=(yz^2-1)\sin(2\pi y)$ for fixed parameters $\eta_{1}=\eta_{2}=2,\gamma
_{1}=\gamma
_{2}=3, s_{1}=s_{2}=2, \alpha_{1}=\alpha_{2}=0.9$ while $m_{1}=m_{2}=10,30,90$ values increase respectively. Also, in Table \ref{tab5}, for the same parameter values we compare the maximum error of approximation of operators $\Re_{m_{1},m_{2},\eta_{1},\eta_{2},\gamma
	_{1},\gamma_{2}}^{(\alpha_{1},\alpha_{2,},s_{1},s_{2})}(\phi;z,y)$ to $\phi(;z,y)=(yz^2-1)\sin(2\pi y)$. It can be easily seen by Figure \ref{fig4} and Table \ref{tab5} that
when the values $m_{1},m_{2}$ increase then the approximation of
operators $\Re_{m_{1},m_{2},\eta_{1},\eta_{2},\gamma
	_{1},\gamma_{2}}^{(\alpha_{1},\alpha_{2,},s_{1},s_{2})}(\phi;z,y)$ to $\phi(;z,y)=(yz^2-1)\sin(2\pi y)$ turn into better for different values $(z,y)\in I^{2}$. 
	
	\begin{figure}[h]
		\centering
		\includegraphics[width=11cm, height=9cm]{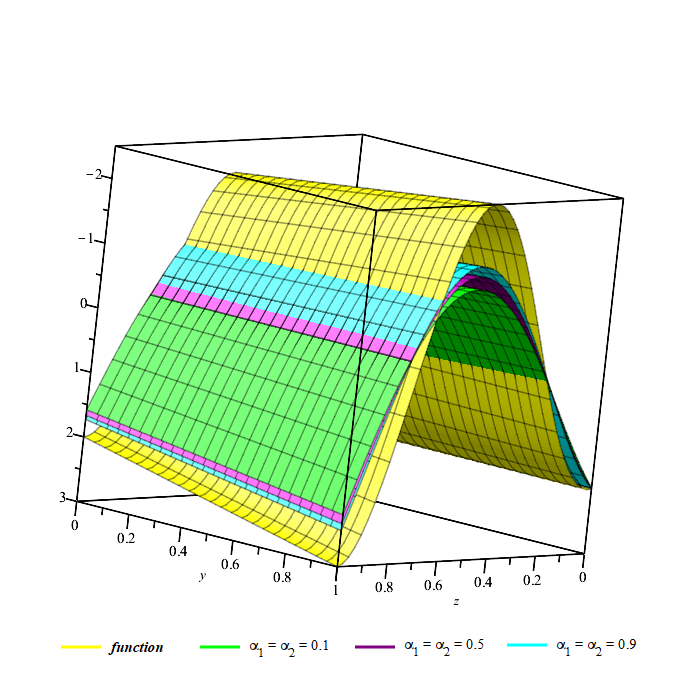} \caption{Convergence of operators
			$\Re_{15,15,2,2,2,2}^{(\alpha_{1},\alpha_{2},2,2)}(\phi;z,y)$ to
			$\phi(;z,y)=(yz+2)\cos(2\pi z)$ for $\alpha_{1}=\alpha_{2}=0.1,0.5,0.9$ }%
		\label{fig5}%
	\end{figure}
\begin{table}[h]
	\caption{The error of approximation of operators
		$\Re_{15,15,2,2,2,2}^{(\alpha_{1},\alpha_{2},2,2)}(\phi;z,y)$ to
		$\phi(;z,y)=(yz+2)\cos(2\pi z)$ for $\alpha_{1}=\alpha_{2}=0.1,0.5,0.9$}%
	\label{tab6}
	\rowcolors{0.5}{anti-flashwhite}{gray(x11gray)}
	\begin{tabular}
		[c]{c|c|c|c|c}\hline
		$(z,y)$&$|\Re_{15,15,2,2,2,2%
		}^{(0.1,0.1,2,2)}(\phi;z,y)-\phi(z,y)|$& $|\Re_{15,15,2,2,2,2%
		}^{(0.5,0.5,2,2)}(\phi;z,y)-\phi(z,y)|$& $|\Re_{15,15,2,2,2,2%
		}^{(0.9,0.9,2,2)}(\phi;z,y)-\phi(z,y)|$& \\\hline
		(0.1,0.1)& 0.195457041& 0.189553367& 0.183649692& \\
		(0.2,0.2)& 0.078242461& 0.076050756& 0.073859051& \\
		(0.3,0.3)& 0.245309369& 0.237433313& 0.229557258& \\
		(0.4,0.4)& 0.563370466& 0.544366814& 0.525363162& \\
		(0.5,0.5)& 0.656509598& 0.630609173& 0.604708748& \\
		(0.6,0.6)& 0.413054142& 0.387853505& 0.362652867& \\
		(0.7,0.7)& 0.125938158& 0.109033855& 0.092129552& \\
		(0.8,0.8)& 0.605991868& 0.601044280& 0.596096692& \\
		(0.9,0.9)& 0.743564319& 0.740101376& 0.736638433& \\\hline
	\end{tabular}
\end{table}
	In Figure
\ref{fig5} and Table \ref{tab6}, we
examine the function
\[
\phi(;z,y)=(yz+2)\cos(2\pi z)
\]
on the interval $I^{2}$. In Figure
\ref{fig5}, we present the convergence behavior of operators $\Re_{m_{1},m_{2},\eta_{1},\eta_{2},\gamma
	_{1},\gamma_{2}}^{(\alpha_{1},\alpha_{2,},s_{1},s_{2})}(\phi;z,y)$ to $\phi(;z,y)=(yz+2)\cos(2\pi z)$ for fixed parameters $\eta_{1}=\eta_{2}=2,\gamma
_{1}=\gamma
_{2}=2, s_{1}=s_{2}=2, m_{1}=m_{2}=15$ while $\alpha_{1}=\alpha_{2}=0.1,0.5,0.9$ values increase respectively. Also, in Table \ref{tab6}, for the same parameter values we compare the maximum error of approximation of operators $\Re_{m_{1},m_{2},\eta_{1},\eta_{2},\gamma
	_{1},\gamma_{2}}^{(\alpha_{1},\alpha_{2,},s_{1},s_{2})}(\phi;z,y)$ to $\phi(;z,y)=(yz^2-1)\sin(2\pi y)$. One can check from Figure \ref{fig5} and Table \ref{tab6} that,
while the values $\alpha_{1},\alpha_{2}$ approaches to 1 then the approximation of
operators $\Re_{m_{1},m_{2},\eta_{1},\eta_{2},\gamma
	_{1},\gamma_{2}}^{(\alpha_{1},\alpha_{2,},s_{1},s_{2})}(\phi;z,y)$ to $\phi(;z,y)=(yz+2)\cos(2\pi z)$ present superior convergence
behavior on $(z,y)\in I^{2}$.

\begin{figure}[h]
	\centering
	\includegraphics[width=11cm, height=9cm]{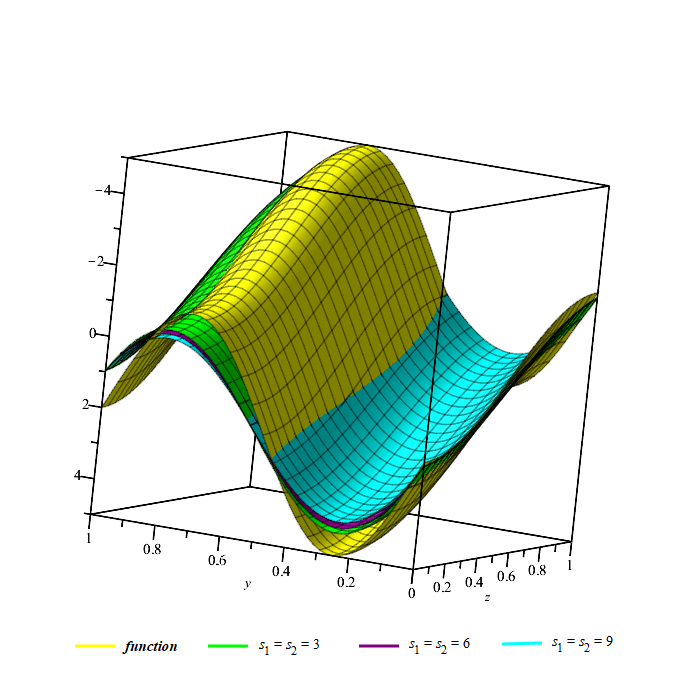} \caption{Convergence of operators
		$\Re_{15,15,3,3,2,2}^{(0.8,0.8,s_{1},s_{2})}(\phi;z,y)$ to
		$\phi(;z,y)=2\cos(\pi z)+3\sin(2\pi y)$ for $s_{1}=s_{2}=3,6,9$ }%
	\label{fig6}%
\end{figure}

\begin{table}[h]
	\caption{The error of approximation of operators $\Re_{15,15,3,3,2,2}^{(0.8,0.8,s_{1},s_{2})}(\phi;z,y)$ to
		$\phi(;z,y)=2\cos(\pi z)+3\sin(2\pi y)$ for $s_{1}=s_{2}=3,6,9$}%
	\label{tab7}
	\rowcolors{0.5}{anti-flashwhite}{gray(x11gray)}
	\begin{tabular}
		[c]{c|c|c|c|c}\hline
		$(z,y)$&$|\Re_{15,15,3,3,2,2%
		}^{(0.8,0.8,9,9)}(\phi;z,y)-\phi(z,y)|$& $|\Re_{15,15,3,3,2,2%
		}^{(0.8,0.8,6,6)}(\phi;z,y)-\phi(z,y)|$& $|\Re_{15,15,3,3,2,2%
		}^{(0.8,0.8,3,3)}(\phi;z,y)-\phi(z,y)|$& \\\hline
		(0.1,0.1)& 0.489005750& 0.363729543& 0.270971583& \\
		(0.2,0.2)& 0.960387677& 0.773290789& 0.622200863& \\
		(0.3,0.3)& 0.962695442& 0.776177886& 0.618766351& \\
		(0.4,0.4)& 0.432043739& 0.297004352& 0.183925249& \\
		(0.5,0.5)& 0.487170570& 0.452180495& 0.400125636& \\
		(0.6,0.6)& 1.123225970& 1.085080625& 1.034878809& \\
		(0.7,0.7)& 1.350081013& 1.240427722& 1.127066981& \\
		(0.8,0.8)& 0.915557368& 0.777200727& 0.646573082& \\
		(0.9,0.9)& 0.214512851& 0.123898803& 0.018575111& \\\hline
	\end{tabular}
\end{table}
	In Figure
\ref{fig6} and Table \ref{tab7}, we
examine the function
\[
\phi(;z,y)=2\cos(\pi z)+3\sin(2\pi y)
\]
on the interval $I^{2}$. In Figure
\ref{fig6}, we show convergence behavior of operators $\Re_{m_{1},m_{2},\eta_{1},\eta_{2},\gamma
	_{1},\gamma_{2}}^{(\alpha_{1},\alpha_{2,},s_{1},s_{2})}(\phi;z,y)$ to $\phi(;z,y)=2\cos(\pi z)+3\sin(2\pi y)$ for fixed parameters $\eta_{1}=\eta_{2}=3,\gamma
_{1}=\gamma
_{2}=2, \alpha_{1}=\alpha_{2}=0.8, m_{1}=m_{2}=15$ while $s_{1}=s_{2}=3,6,9$ values increase respectively. Also, in Table \ref{tab7}, for the same parameter values we compare the maximum error of approximation of operators $\Re_{m_{1},m_{2},\eta_{1},\eta_{2},\gamma
	_{1},\gamma_{2}}^{(\alpha_{1},\alpha_{2,},s_{1},s_{2})}(\phi;z,y)$ to $\phi(;z,y)=2\cos(\pi z)+3\sin(2\pi y)$. One can easily check from Figure \ref{fig6} and Table \ref{tab7} that, for small values of $s_{1},s_{2}$ then approximation of
operators $\Re_{m_{1},m_{2},\eta_{1},\eta_{2},\gamma
	_{1},\gamma_{2}}^{(\alpha_{1},\alpha_{2,},s_{1},s_{2})}(\phi;z,y)$ to $\phi(;z,y)=2\cos(\pi z)+3\sin(2\pi y)$ becomes better  on $(z,y)\in I^{2}$.

\section{Conclusion}

In this study, we proposed Riemann-Liouville type fractional a novel generalization of Bernstein-Kantorovich version operators. Some basic moment estimates of operators $\Re_{m,\eta,\gamma}^{(\alpha
	,s)}(\phi;z)$ are investigated. Subsequently, we delved into the exploration of some local
direct approximation theorems related with these new operators. Additionally, we illustrated various graphical and numerical
examples to demonstrate the convergence behavior, computational efficiency, advantage,
and consistency of operators $\Re_{m,\eta,\gamma}^{(\alpha
	,s)}(\phi;z)$ under various parameter configurations. Moreover, this new operator was compared with other operators and better approximation results were obtained in terms of some parameters.  Also, we considered the bivariate extension of operators $\Re_{m,\eta,\gamma}^{(\alpha
	,s)}(\phi;z)$ 
and studied degree of approximation in terms of partial
and complete modulus of continuity. Finally, we provided some graphical
representations and absolute error of approximation results to verify the convergence
behavior of bivariate form of proposed operators under different parameter configurations.

\noindent\textbf{Declarations}\newline

\noindent\textbf{Funding:} The author does not receive support from any
organization for the submitted work.\newline

\noindent\textbf{Conflict of interest:} The author declares that there is no
known competing financial interests or personal relationship that could have
appeared to influence the work reported in this article.\newline

\noindent\textbf{Data Availability Statement:} All data generated or analyzed
during this study are included in this published article.

\end{document}